\documentclass[11pt]{amsart}
\usepackage[margin=3cm]{geometry}
\usepackage{bm}
\usepackage{amsthm}
\usepackage{amssymb}
\usepackage{mathrsfs}
\usepackage{hyperref,xcolor,graphicx}
\usepackage[foot]{amsaddr}

\newcommand{\R}{\mathbb{R}}
\newcommand{\Z}{\mathbb{Z}}
\newcommand{\e}{\epsilon}

\DeclareMathOperator{\supp}{supp}
\DeclareMathOperator{\ind}{ind}
\DeclareMathOperator{\crit}{crit}

\begin{document}

\begin{abstract}
In this short note, using OpenAI's resolution of the Solomon--Yau conjecture, we extend our joint work \cite{ChenGaspar23,ChenGaspar25} with Pedro Gaspar to all dimensions $n\geq2$. We construct eternal Allen--Cahn flows on $S^{n+1}$ connecting approximations of any prescribed least-area Clifford hypersurface and equatorial sphere. We also construct infinitely many eternal integral Brakke flows from least-area Clifford hypersurfaces to equatorial spheres.
\end{abstract}

\title[Flows from Clifford hypersurfaces to equators]{Allen--Cahn and mean curvature flows from Clifford hypersurfaces to equators}
\author{Jingwen Chen}
\address{Faculty of Science
University of Macau,\newline
E11, Avenida da Universidade, Taipa,
Macau}
\email{jingwenchen@um.edu.mo}
\maketitle

\newtheorem{thm}{Theorem}
\newtheorem*{thm*}{Theorem}
\newtheorem{lem}{Lemma}
\newtheorem{prop}{Proposition}
\newtheorem{coro}{Corollary}

\theoremstyle{definition}
\newtheorem*{defi}{Definition}
\newtheorem*{ex}{Example}
\newtheorem*{rmk}{Remark}

\section{Introduction}

In our previous papers \cite{ChenGaspar23,ChenGaspar25} joint with P. Gaspar, we studied low energy solutions of the \emph{parabolic Allen-Cahn equation}
\begin{equation} \label{PAC}
\e \, \partial_t u = \epsilon \Delta_g u - \frac{1}{\epsilon} W'(u)\tag{PAC}
\end{equation}
in the $3$-sphere in connection with the \emph{mean curvature flow}. Here $W$ is a nonnegative double-well potential with two wells at $\pm 1$, such as $W(u)=(1-u^2)^2/4$. Our previous results are the following:

\begin{thm*} [Chen-Gaspar, \cite{ChenGaspar23}]
There exist (infinitely many) eternal weak mean curvature flows $\{\Sigma_t\}$ in $S^3$ connecting a Clifford torus ($S^1(1/\sqrt{2})\times S^1(1/\sqrt{2}) \subset S^3$) to equatorial spheres $S^2$. These flows are smooth for large $|t|$.  
\end{thm*}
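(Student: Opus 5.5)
The plan is to build the flows first at the level of \eqref{PAC}, as gradient-flow trajectories of the Allen--Cahn energy
\[
E_\e(u)=\int_{S^3}\Big(\frac{\e}{2}|\nabla u|^2+\frac{W(u)}{\e}\Big),
\]
and then let $\e\to0$ using Ilmanen's convergence theory for the parabolic Allen--Cahn equation.

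\textbf{Stationary solutions.} I first need critical points of $E_\e$ that approximate the Clifford torus and the equator. For the Clifford torus $T$, I would use a solution $u_\e^T$ that is odd with respect to the reflection exchanging the two components of $S^3\setminus T$. Such a solution can be obtained by min-max in the symmetric class, or by gluing, using the nondegeneracy of $T$ modulo the isometries of $S^3$. Its Morse index is $5$ in the limit, matching the index of $T$, and its energy tends to $\sigma\cdot 2\pi^2$. For the equator I would take the one-dimensional heteroclinic profile in the signed distance, which gives $u_\e^{S}$ with energy tending to $\sigma\cdot4\pi$. Since $4\pi<2\pi^2$, the torus solution sits at a strictly higher energy level.

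\textbf{Eternal trajectories.} Choose an eigenfunction $\phi$ of the linearized operator at $u_\e^T$ with negative eigenvalue $-\lambda$. Take $\phi$ corresponding to a deformation of $T$ that does not come from a Killing field, for example one of the conformal-type eigenfunctions; the unstable directions coming from rotations are absent since rotations preserve area. By the unstable manifold theorem for the semilinear parabolic equation, there is an ancient solution $u(t)$ with
\[
u(t)\approx u_\e^T+e^{\lambda t}\phi \quad\text{as } t\to-\infty .
\]
Since $E_\e$ is a Lyapunov function and $S^3$ is compact, $u(t)$ converges as $t\to+\infty$ to a critical point $u_\infty$ with $E_\e(u_\infty)<E_\e(u_\e^T)$. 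I must rule out that $u_\infty$ is a constant $\pm1$ and ensure it is an equator-type solution. For this I would use symmetry: impose the reflection symmetries that the Clifford torus and the chosen $\phi$ share. This forces $u(t)$ to be odd under some reflection through an equatorial $S^2$, so $u_\infty$ is odd and vanishes on that equator, which excludes the constants. Low energy classification, that is, energy below roughly $2\pi^2\sigma$ together with the symmetries, should then force $u_\infty$ to be the equator solution. This is where the Caju--Gaspar-type index and energy bounds, or Guaraco-type classification, would enter. Varying the eigendirection, or the parameter in the unstable manifold, gives infinitely many distinct trajectories, for instance ones landing on different equators.

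\textbf{Passage to MCF.} Apply Ilmanen and Tonegawa: the energy measures of $u_\e$ subconverge to an eternal integral Brakke flow. Because the energies are uniformly bounded and the flows are monotone, the limit flow at $t\to\pm\infty$ is a stationary integral varifold with area in the interval $[4\pi,2\pi^2]$. I would show that at $t\to-\infty$ the limit is $T$ and at $t\to+\infty$ an equator, using the fact that the $\e$-solutions stay close to $u_\e^T$ and $u_\e^S$ on time intervals whose length is uniform in $\e$. This requires a \L ojasiewicz-type estimate near these nondegeneracies. Smoothness for large $|t|$ then follows from Brakke's or White's regularity, since the limit flow is close to a multiplicity-one smooth surface.

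\textbf{Main obstacle.} I expect the hardest step to be this endpoint identification uniform in $\e$. One must prevent the trajectory, in the limit $\e\to0$, from spending unbounded time near intermediate critical levels, or from losing energy by concentrating somewhere else. I would attack it using the uniform unstable-manifold parametrization near $u_\e^T$ together with the symmetry constraints, so that the $\e\to0$ limit of the backward end is exactly the Clifford torus.
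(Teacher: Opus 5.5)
Your architecture (an ancient Allen--Cahn flow leaving an index-$5$ Clifford solution, a symmetry to exclude $\pm1$, then Ilmanen--Tonegawa) is the paper's. However, the step you flag as the main obstacle is a genuine gap, and the remedy you propose is not what is needed. If you keep the time parametrization coming from the unstable manifold, the transition time may drift to $\pm\infty$ as $\e_j\to0$, and the Brakke limit may then be static. Controlling this through a \L ojasiewicz inequality uniform in $\e$ near $u_\e^T$ and $u_\e^S$ would require estimates you do not establish, and the argument does not use them.

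The missing idea is to translate time so that $E_\e(u_\e(\cdot,0))=\sigma(4\pi+2\pi^2)$, the midpoint of the two critical levels, and then use an area gap. Since each $u_\e$ converges forward to a ground state, $E_\e(u_\e(\cdot,t))\geq E_\e(u_\e^{+\infty})$. Hence the limit masses are nonincreasing, lie in $[4\pi,2\pi^2]$, and equal $2\pi+\pi^2$ at $t=0$. Subsequential limits as $t\to-\infty$ and $t\to+\infty$ are therefore stationary integral varifolds with areas in $[2\pi+\pi^2,2\pi^2]$ and $[4\pi,2\pi+\pi^2]$, respectively.

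What identifies these limits is the varifold-level gap theorem: a stationary integral varifold in $S^3$ of area at most $2\pi^2$ that is cyclic mod $2$ is a multiplicity-one equator or Clifford torus. This follows from \cite{marques2014min} together with Lemma $5.8$ of \cite{choimantoulidis22}, with multiplicity one from Lemma~\ref{density}. The mod $2$ condition comes from the parity arguments of \cite{ChenGaspar23} and cannot be dropped: three great hemispheres meeting at $120^\circ$ along a great circle are stationary with area $6\pi\in(2\pi+\pi^2,2\pi^2)$. You never invoke Marques--Neves or parity. The corresponding $\e$-level classification (as in Theorem~\ref{lowenergy}) is also what shows $u_\infty$ is a ground state, and that there are no intermediate critical levels to get stuck at. Once every subsequential limit is a multiplicity-one Clifford torus or equator, Brakke's local regularity gives smoothness for large $|t|$.

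Your symmetry step is also wrong as written. The Clifford solution depends only on $x_1^2+x_2^2$, so it is even under all coordinate reflections. Moreover, no reflection through an equatorial $S^2$ exchanges the two sides of $T$. Any such isometry has block form $\begin{pmatrix}0&A\\B&0\end{pmatrix}$ with $A,B\in O(2)$, hence trace $0$, so it fixes at most a great circle. Thus ``$u_\infty$ is odd and vanishes on that equator'' is false.

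What works (Section $5.1$ of \cite{ChenGaspar23}) is the interchange $\tau(x_1,x_2,x_3,x_4)=(x_3,x_4,x_1,x_2)$. It satisfies $u_\e^T\circ\tau=-u_\e^T$ and commutes with the linearized operator because $W''$ is even. You must then choose $\phi$ odd under $\tau$, e.g. the eigenfunction coming from $(x_1-x_3)\nu$. An arbitrary conformal direction fails, since $\tau$ sends the $x_1\nu$ eigenfunction to the $x_3\nu$ one, and $\varphi_1$ is $\tau$-even. With this choice, you get $u(\cdot,t)\circ\tau=-u(\cdot,t)$, which excludes $\pm1$. This follows from uniqueness of ancient flows with prescribed tangent \cite{choimantoulidis22}, or from the unstable manifold theorem in the flow-invariant space of $\tau$-odd functions. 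Alternatively, use the connectedness argument along a path from $(r,0,\ldots,0)$ to $(-r,0,\ldots,0)$.

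For infinitely many flows, it suffices to rotate one flow by $\mathrm{SO}(2)\times\mathrm{SO}(2)$. This group preserves $T$ but fixes no equator, so the rotated flows are distinct.
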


Let $v_\e^{-\infty}$ be a critical point of $E_\e$ in $S^3$ which vanishes precisely along $\{x \in S^3 \mid x_1^2 + x_2^2 = x_3^2 + x_4^2 = 1/2\}$. 

\begin{thm*} [Chen-Gaspar, \cite{ChenGaspar25}]
For sufficiently small $\e>0$, for any ground state solution $v_\e^{+\infty}$ of the Allen-Cahn equation, there is an eternal solution $v_\e \colon S^3 \times \R \to \R$ to \eqref{PAC} such that $\| v_\e(\cdot,t) - v_\e^{\pm \infty} \|_{W^{1,2}(S^3)} \to 0$, as $t \to \pm \infty.$
\end{thm*}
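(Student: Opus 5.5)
The plan is to build $v_\e$ as a gradient-flow line of $E_\e$. Its backward end will be the unstable critical point $v_\e^{-\infty}$ and its forward end a ground state. We then use a topological argument over the unstable manifold to hit the prescribed ground state $v_\e^{+\infty}$.

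\emph{Step 1 (symmetry and the unstable manifold).} Work in the class of odd functions, $u(-x)=-u(x)$. Because $\{x_1^2+x_2^2=x_3^2+x_4^2\}$ is antipodally invariant, $v_\e^{-\infty}$ can be taken odd; by uniqueness considerations it is then odd after all. Oddness rules out the constants $\pm1$ as limits of the flow. Next, linearize $E_\e$ at $v_\e^{-\infty}$. It has finite Morse index $k\ge1$ on odd functions, and by comparison with the Jacobi operator of the Clifford torus $k$ is at least the Clifford index $5$. The stable/unstable manifold theorem for the semilinear parabolic flow \eqref{PAC} gives a local unstable manifold $W^u$, parametrized by a small $k$-ball $B$. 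Each $p\in\partial B$ yields an ancient solution $u_p$ with $u_p(\cdot,t)\to v_\e^{-\infty}$ in $W^{1,2}$ as $t\to-\infty$.

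\emph{Step 2 (forward limits are ground states).} Flow $u_p$ forward. Since $E_\e$ is a Lyapunov functional, $E_\e(u_p(t)) < E_\e(v_\e^{-\infty})$ for all $t$. By \L{}ojasiewicz--Simon, $u_p(t)$ converges to an odd critical point $w$ with $E_\e(w)<E_\e(v_\e^{-\infty}) = 2\pi^2\sigma + o(1)$. Suppose, for contradiction, that odd critical points with energy $\le 2\pi^2\sigma+o(1)$ other than ground states existed along a sequence $\e\to0$. Their energy measures would converge (Hutchinson--Tonegawa) to a nontrivial, antipodally symmetric, stationary integral varifold in $S^3$ of mass $\le 2\pi^2$. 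By Marques--Neves this forces either an equator with multiplicity one or a Clifford torus. Two equators or a doubled equator are excluded, since their mass $8\pi$ exceeds $2\pi^2$. The Clifford case is excluded by the strict energy drop, using the gap $E_\e(w)\le E_\e(v_\e^{-\infty})-\delta$ along the flow and a quantitative form of the energy identity. Curvature estimates (Wang--Wei, Chodosh--Mantoulidis) then upgrade the equator case: $w$ is a ground state. Ground states on $S^3$ are unique up to rotation for small $\e$, so $w=g\cdot v_\e^{+\infty}$ for some $g$. This produces a map $\Phi:\partial B\cong S^{k-1}\to \mathcal G$, where $\mathcal G\cong S^3$ is the ground-state manifold, indexed by the oriented unit normal $\nu$ of the nodal equator.

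\emph{Step 3 (surjectivity; the main obstacle).} We must show that $\Phi$ hits the prescribed $v_\e^{+\infty}$. Continuity of $\Phi$ follows because $\mathcal G$ is a nondegenerate Morse--Bott critical manifold: its kernel is exactly rotations, again by Caju--Gaspar. Hence convergence to $\mathcal G$ is exponential and uniform, and nearby initial data have nearby limits.

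For surjectivity, I would first restrict to the 4-dimensional block of unstable directions corresponding to the Clifford Jacobi eigenfunctions $\langle x,a\rangle$-type modes, namely the coordinate functions restricted to the torus, times the profile. On this block, perturbing by the mode $a$ pushes the interface toward the equator $a^\perp$. This suggests that $\Phi$ restricted to $S^3\subset\partial B$ is homotopic to the identity $a\mapsto \pm a$. The sign is to be fixed consistently using the equivariance of everything under $O(4)\cap\mathrm{Stab}(\text{Clifford})$. A map $S^3\to S^3$ of nonzero degree is surjective, which gives the desired $p$.

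The hard part is justifying the degree computation. The plan is a homotopy $\lambda\in(0,1]$ scaling the perturbation size, together with an analysis of the linearized dynamics near $v_\e^{-\infty}$ showing that the dominant unstable mode determines which equator is approached. If this direct analysis fails, a fallback is to use the $O(2)\times O(2)$ equivariance of $\Phi$. This equivariance restricts the possible degrees, and a direct computation at one symmetric point, where the limit is forced by an extra reflection symmetry, should show the degree is odd.
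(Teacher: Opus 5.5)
\textbf{There is a genuine gap, and it starts with Step 1, which is false.} A critical point vanishing exactly on the Clifford torus has a fixed sign on each of the two solid tori $\{x_1^2+x_2^2>1/2\}$ and $\{x_1^2+x_2^2<1/2\}$. The map $x\mapsto -x$ sends each of these to itself, so $v_\e^{-\infty}(-x)$ has the same sign as $v_\e^{-\infty}(x)$. Thus the Clifford solution is antipodally \emph{even} (it depends only on $x_1^2+x_2^2$), while ground states are antipodally odd. Since \eqref{PAC} preserves antipodal parity, no parity class contains both endpoints, and any connecting flow must break this symmetry.

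Consequently, nothing in your argument excludes $\pm1$ as forward limits. In fact, the flows from $v_\e^{-\infty}$ in the directions $\pm\varphi_1$ converge to $\pm1$, and the sets of parameters whose flows converge to $\pm1$ are open. So $\Phi$ is defined only on the closed set $K\subset\partial B\cong S^4$ of parameters with ground-state limits. Your continuity argument also fails: ground states have Morse index $1$, so $\mathcal G$ is a saddle rather than an attractor, and data near a point of $K$ may fall to $\pm1$.

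Restricting to the $S^3$ of coordinate modes does not fix this. No symmetry prevents a direction there from flowing to $\pm1$. In \cite{ChenGaspar23}, non-constant limits are obtained only for special directions (equal-and-opposite combinations exchanged by the factor swap), or by a connectedness argument along a path through the $\varphi_1$ direction. Heuristically the situation is worse. The coordinate modes are the conformal Jacobi fields $\langle e,\nu\rangle$, and in the Marques--Neves canonical family a conformal dilation toward a point off the torus collapses it to a point. So one should expect ground-state limits only along roughly a $2$-torus of directions in your $S^3$. The degree of $\Phi|_{S^3}$ is then not even defined, and the linearized heuristic in Step 3 cannot repair this.

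\textbf{The missing ingredient is a global, variational reason} why the unstable manifold must reach the \emph{prescribed} ground state. A local analysis near $v_\e^{-\infty}$ does not see where flow lines end. The paper, following \cite{ChenGaspar25}, uses two facts: $E_\e(v_\e^{-\infty})=\omega_5(E_\e)$, and the critical set in $\{E_\e\le\omega_5(E_\e)\}$ is Morse--Bott, consisting of the constants, the $3$-sphere of ground states, and the Clifford orbit. It perturbs $E_\e$ near these critical manifolds by auxiliary even Morse functions with maxima at $\pm v_\e^{+\infty}$ and minima at $\pm v_\e^{-\infty}$. Then $\pm v_\e^{+\infty}$ become nondegenerate of index $4$, $\pm v_\e^{-\infty}$ become nondegenerate of index $5$, and every critical point below the level of $v_\e^{+\infty}$ has index at most $3$.

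A homological argument based on the fifth width then produces a connecting orbit of the perturbed flow from $v_\e^{-\infty}$ to $v_\e^{+\infty}$. It treats the unstable disks $D^5$, with boundary spheres $S^4$, as $5$-sweepouts. Roughly, without such an orbit the disk could be deformed in a way incompatible with the min-max value $\omega_5$. Finally, the perturbation is sent to zero and the endpoints are identified via the low-energy classification. Note that this uses the full $5$-disk, including the $\varphi_1$ direction, not a $4$-dimensional block.

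If you want to keep a degree-type argument, you would have to work on the closed set $K$, which separates $S^4$, and show that $\Phi^*$ is nonzero on $\check H^3(K)$. That again requires a global input of this min-max kind.
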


The proof relies on the fact that the Clifford torus is the embedded, non-totally geodesic, minimal surface of least area $2 \pi^2$ in $S^3$, which was proved by Marques-Neves in their resolution of the Willmore conjecture \cite{marques2014min}. The main obstacle to extending these results to higher dimensions was the Solomon--Yau conjecture, which states that a minimal Clifford hypersurface has least area among all non-totally geodesic closed minimal hypersurfaces of $S^{n+1}$. This conjecture was recently resolved by OpenAI \cite{OAI:The-Solomon-Yau-least-volume-theorem-September-23-2026} for $n \geq 3$.

Here we give the expression of the minimal Clifford hypersurfaces:
\begin{align*}
T_k = S^k (\sqrt{\frac{k}{n}}) \times S^{n-k} (\sqrt{\frac{n-k}{n}}) \subset S^{n+1},
\end{align*}
where $n \geq 2$ and $1 \leq k \leq n-1$. $T_k$ has Morse index $n+3$ (see \cite{perdomo2001low}) and nullity $(k+1)(n-k+1)$ (see \cite{hsiang1971minimal}). Let $T_{min}^n$ denote the Clifford hypersurface in $S^{n+1}$ with lowest area among all Clifford hypersurfaces $T_k$ in $S^{n+1}$. The explicit expression of $T_{min}^n$ was given by the author in Appendix B, Proposition $3$ of \cite{Chen24}, as $T_{min}^n = T_p$, where $p = \left\lfloor \frac{n}{2}\right\rfloor$.

In \cite{Chen24}, the author also constructed an $S^1$ family of $\mathrm{SO}(n)$ invariant eternal weak mean curvature flows in $S^{n+1}$ connecting the Clifford hypersurface $T_1$ to equatorial spheres.

Combining the main theorem with Proposition $3.5$, Theorem $4.6$, Proposition $5.1$, and Corollary $6.6$ of \cite{OAI:The-Solomon-Yau-least-volume-theorem-September-23-2026}, we have the following classification and rigidity result:

\begin{thm}\label{classification}
Let $n\geq 2$. Among smooth non-totally geodesic minimal hypersurfaces in $S^{n+1}$, the least area is attained by $T_{\min}^n$. Moreover, $\omega_{n+3}(S^{n+1})=\text{Area}(T_{\min}^n).$ Every smooth minimal hypersurface $\Sigma\subset S^{n+1}$ satisfying $\text{Area}(\Sigma)=\text{Area}(T_{\min}^n)$ is congruent to $T_{\min}^n$.
\end{thm}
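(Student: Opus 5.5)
The plan is to assemble the theorem from three ingredients: the Marques--Neves theory in dimension $n=2$, the OpenAI results in dimensions $n\ge 3$, and the elementary fact that $T^n_{\min}=T_p$ with $p=\lfloor n/2\rfloor$ minimizes area among the $T_k$ (Proposition 3 of \cite{Chen24}).

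\textbf{Case $n=2$.} Here $T^2_{\min}=T_1$ is the Clifford torus. The least-area statement and the rigidity statement are Marques--Neves \cite{marques2014min}: every embedded non-totally geodesic minimal surface in $S^3$ has area at least $2\pi^2$, with equality only for congruent copies of the Clifford torus. The width identity $\omega_5(S^3)=2\pi^2$ also comes out of their min-max argument. They show that the five-parameter canonical family of the Clifford torus has maximal area $2\pi^2$. They then prove that the associated min-max width equals $2\pi^2$, realized by a minimal surface of index at most $5$.

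\textbf{Case $n\ge 3$.} I would follow the same outline, now quoting \cite{OAI:The-Solomon-Yau-least-volume-theorem-September-23-2026}. The upper bound $\omega_{n+3}(S^{n+1})\le \mathrm{Area}(T_{\min}^n)$ comes from the canonical $(n+3)$-parameter family built from $T^n_{\min}$, namely conformal dilations together with parallel hypersurfaces. Its sweepout area is maximized at $T^n_{\min}$ itself (Proposition 3.5 there). The matching lower bound and the identification of the min-max hypersurface come from Theorem 4.6 and Proposition 5.1. The width is attained by a smooth minimal hypersurface of index at most $n+3$ that is not totally geodesic. The index classification then forces it to be a Clifford hypersurface $T_k$, using Perdomo's result that $T_k$ has index $n+3$ \cite{perdomo2001low}. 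So the width is at least $\min_k \mathrm{Area}(T_k)=\mathrm{Area}(T^n_{\min})$.

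The least-area claim follows by a standard comparison. If some non-totally geodesic $\Sigma$ had $\mathrm{Area}(\Sigma)<\mathrm{Area}(T^n_{\min})$, then the canonical family built from $\Sigma$ would give a sweepout of width below $\omega_{n+3}$, which is a contradiction. Rigidity is Corollary 6.6: equality forces $\Sigma$ to arise as the min-max hypersurface, hence to be some $T_k$ with minimal area, hence congruent to $T_p$.

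\textbf{Main obstacle.} The delicate point is bookkeeping rather than new mathematics. First, the quoted results must be stated for all smooth minimal hypersurfaces, not just embedded ones or those of a given index. Second, the notion of width $\omega_{n+3}$ must match the paper's convention. Third, the case where $n$ is odd and two Clifford hypersurfaces have equal area needs care. For odd $n=2p+1$, $T_p$ and $T_{p+1}$ are congruent under swapping the factors, so congruence to $T^n_{\min}$ is unaffected.
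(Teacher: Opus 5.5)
Your skeleton matches the paper's, which obtains Theorem~\ref{classification} purely by quotation. It uses Marques--Neves \cite{marques2014min} for $n=2$. For $n\ge 3$ it combines the main theorem of \cite{OAI:The-Solomon-Yau-least-volume-theorem-September-23-2026} with Proposition 3.5, Theorem 4.6, Proposition 5.1 and Corollary 6.6 there, plus $T^n_{\min}=T_p$ from \cite{Chen24}. The trouble is the reasoning you insert between the citations.

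Your lower bound $\omega_{n+3}(S^{n+1})\ge\mathrm{Area}(T^n_{\min})$ rests on the step ``the index classification then forces it to be a Clifford hypersurface $T_k$, using Perdomo's result that $T_k$ has index $n+3$.'' Your least-area and rigidity claims depend on it as well. Knowing that each $T_k$ has index $n+3$ gives nothing in the converse direction. What you need is that every non-totally geodesic minimal hypersurface of index at most $n+3$ is a Clifford hypersurface. For $n=2$ this is Urbano's theorem. For $n\ge 3$ it is known only under extra hypotheses such as invariance under the antipodal map, which is what \cite{perdomo2001low} actually proves; it is not available in general. In addition, for $n\ge 7$ general min-max theory does not produce a smooth min-max hypersurface. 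So ``the width is attained by a smooth minimal hypersurface'' cannot be taken as given; the Remark after Theorem~\ref{classification} stresses that smoothness is an output of the theorem, not an input.

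Your ``standard comparison'' for the least-area statement is also not standard when $n\ge 3$. For an arbitrary non-totally geodesic $\Sigma$ it requires two things:
\begin{itemize}
\item the canonical $(n+3)$-parameter family must be a nontrivial $(n+3)$-sweepout (in Marques--Neves this is the computation that a boundary degree equals the genus);
\item every member must have area at most $\mathrm{Area}(\Sigma)$ (in Marques--Neves this uses the conformal invariance of the Willmore energy, which is special to $n=2$).
\end{itemize}
This is exactly the substance of the Solomon--Yau conjecture, yet you only invoke Proposition 3.5 for the family of $T^n_{\min}$.

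The fix is to run the argument in the paper's order:
\begin{itemize}
\item take the least-area statement directly from the main theorem of \cite{OAI:The-Solomon-Yau-least-volume-theorem-September-23-2026};
\item get $\omega_{n+3}\le\mathrm{Area}(T^n_{\min})$ from the canonical family of $T^n_{\min}$;
\item get $\omega_{n+3}\ge\mathrm{Area}(T^n_{\min})$ by applying the least-area theorem to the min-max hypersurface, once it is known to be non-totally geodesic (the role you assign to Theorem 4.6 and Proposition 5.1);
\item quote Corollary 6.6 for rigidity.
\end{itemize}
No index classification is then needed.

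A smaller point for $n=2$: Marques--Neves compute the width of their relative homotopy class of $5$-parameter families. Identifying it with the volume-spectrum width $\omega_5(S^3)$ in the Gromov--Guth sense needs a further argument, due to Nurser.
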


Here $\omega_k$ denotes the min-max width, see e.g. \cite{guthminmax,MarquesNevesRicci}.

\begin{rmk}
This computation provides a family of explicit examples in all dimensions whose second distinct min-max value is realized by a smooth minimal hypersurface, although general min-max theory allows singularities when $n\geq7$, see e.g. \cite{Zhou17}.
\end{rmk}

The terminology used here will be introduced in Section \ref{preliminary}. Equation \eqref{PAC} is the negative gradient flow of the Allen--Cahn energy $E_\e$. Let $u_\e^{-\infty}$ be a critical point of $E_\e$ in $S^{n+1}$ which is the Allen-Cahn approximation of the Clifford hypersurface $\mathcal{T} = \{x \in S^{n+1} \mid \sum\limits_{i=1}^{p+1} x_i^2 = \frac{p}{n}\}$. The existence of such solutions comes from \cite{CajuGaspar}. Recall from \cite{CGGM} that ground state solutions of the Allen-Cahn equation on $S^{n+1}$ accumulate on an equatorial sphere $S^n$. 

In this short note, using Theorem \ref{classification}, we generalize our previous theorems to all dimensions $n \geq 2$ and prove:

\begin{thm} \label{Morse}
For sufficiently small $\e>0$, for any ground state solution $u_\e^{+\infty}$ of the Allen-Cahn equation, there is an eternal solution $u_\e \colon S^{n+1} \times \R \to \R$ to \eqref{PAC} such that
\[\| u_\e(\cdot,t) - u_\e^{\pm \infty} \|_{W^{1,2}(S^{n+1})} \to 0, \quad \text{as} \quad t \to \pm \infty. \]
\end{thm}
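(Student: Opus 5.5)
We follow the strategy of \cite{ChenGaspar25}, with the Marques--Neves input replaced by Theorem \ref{classification}. The idea is to take the unstable manifold of $u_\e^{-\infty}$ for the negative gradient flow \eqref{PAC} of $E_\e$, and to show that a suitable trajectory in it must converge to a ground state. We then show that every ground state $u_\e^{+\infty}$ is reached this way, using the symmetries of the problem.

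\textbf{Step 1: Morse data of $u_\e^{-\infty}$.} Using the construction of \cite{CajuGaspar} and the spectral convergence of the Allen--Cahn linearization to the Jacobi operator of $\mathcal T=T_p$, we show that for small $\e$ the Morse index of $u_\e^{-\infty}$ equals $\ind(T_p)+1=n+4$, and that its nullity equals $\dim$ of the orbit of $u_\e^{-\infty}$ under $O(n+2)$. The extra unit of index comes from the "sign/translation" direction. The nullity claim says $u_\e^{-\infty}$ is Morse--Bott nondegenerate, i.e.\ integrable modulo the symmetry group. This uses the fact that the Jacobi fields of $T_p$, of dimension $(p+1)(n-p+1)$, are all generated by rotations.

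Given this, the unstable manifold $W^u$ is a smooth embedded $(n+4)$-disk of initial data, and every trajectory in it is an eternal solution converging to $u_\e^{-\infty}$ in $W^{1,2}$ as $t\to-\infty$. By the Łojasiewicz--Simon inequality for the analytic functional $E_\e$, each such trajectory converges as $t\to+\infty$ to a critical point $u_+$ with $E_\e(u_+)<E_\e(u_\e^{-\infty})$.

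\textbf{Step 2: energy gap forces the limit to be a ground state.} We claim that for small $\e$, every nonconstant critical point with energy below $E_\e(u_\e^{-\infty})\approx \sigma\,\mathrm{Area}(T_{\min}^n)$ and index at most $n+4$ is a ground state. Suppose instead there were such critical points $u_\e$ for $\e\to0$ that are not ground states. By Hutchinson--Tonegawa and Guaraco/Chodosh--Mantoulidis regularity, they accumulate on a minimal hypersurface $\Sigma$ of bounded index with area at most $\mathrm{Area}(T_{\min}^n)$.

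By Theorem \ref{classification}, one of two things holds. Either $\Sigma$ is an equator, possibly with multiplicity, which multiplicity-one results together with the area bound $\mathrm{Area}(T_p)<2|S^n|$ reduce to multiplicity one. Or $\Sigma$ is congruent to $T_{\min}^n$ with equality in energy.

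In the equator case, the uniqueness results of \cite{CGGM} identify $u_\e$ as a ground state. The equality case must be excluded using the strict energy drop $E_\e(u_+)<E_\e(u_\e^{-\infty})$ together with a quantitative gap. The gap comes from the local rigidity of the Morse--Bott family near $T_p$: any critical point near an approximation of $T_p$ lies in the $O(n+2)$-orbit of $u_\e^{-\infty}$, so it has the same energy, which is a contradiction.

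\textbf{Step 3: hitting every ground state.} Ground states form the $O(n+2)$-orbit of a single solution, or possibly its negative, by \cite{CGGM}. Moreover, $u_\e^{-\infty}$ is invariant under $O(p+1)\times O(n-p+1)$, which has no fixed equator. We use a degree argument on the sphere of directions in $W^u$. The endpoint map to the space of ground states, identified with $S^{n+1}$ via the unit normal of the limit equator, is continuous by Step 2 and stability of ground states. We show it is equivariant and has nonzero degree, as in \cite{ChenGaspar25}; the $t\to-\infty$ behavior is handled by linearization and the far end by tubular-neighbourhood Łojasiewicz estimates. By nonzero degree the map is surjective, so every $u_\e^{+\infty}$ is attained, giving the eternal solution claimed in Theorem \ref{Morse}.

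\textbf{Main obstacle.} The hardest part is Step 2, specifically ruling out limits that are non-totally geodesic minimal hypersurfaces other than Clifford ones, and handling singular limits when $n\geq7$. Theorem \ref{classification} is stated only for smooth hypersurfaces. For the latter, we would first try the identity $\omega_{n+3}(S^{n+1})=\mathrm{Area}(T_{\min}^n)$ together with the index bound and Bellettini/Wickramasekera regularity for stable Allen--Cahn limits. If that fails, we would establish the energy gap by a direct min-max comparison.
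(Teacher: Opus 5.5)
\textbf{Gap in Step 3.} The degree argument has no content as set up. First, the endpoint map is not defined on the whole unstable sphere. An open nonempty set of directions flows to $+1$ and another to $-1$; for instance $\pm\varphi_1$ do so, by the parabolic maximum principle and Hiesmayr's Frankel-type property. Second, ground states are index-one saddles, not attractors, so ``stability of ground states'' is false. Trajectories near one converging to a ground state can fall to $\pm1$, so the map to $A_\e\cong S^{n+1}$ is not continuous either. Third, even on a closed manifold, a map from the unstable sphere $S^{n+2}$ (or $S^{n+3}$ with your count) to $S^{n+1}$ has no degree. Equivariance under the stabilizer $\mathrm{SO}(p+1)\times\mathrm{SO}(n-p+1)$ cannot rescue surjectivity, since this group does not act transitively on the poles.

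Reaching a prescribed ground state is really an intersection problem. The stable set of a single ground state has codimension $n+2$ in $W^{1,2}$, so inside the $(n+3)$-dimensional unstable manifold one expects only isolated trajectories. \cite{ChenGaspar25} does not use a degree argument here; the missing idea is the Morse perturbation:
\begin{itemize}
\item Perturb $E_\e$ near the Morse--Bott manifolds $A_\e$ and $B_\e$ by auxiliary even Morse functions, with maxima at $\pm u_\e^{+\infty}$ and minima at $\pm u_\e^{-\infty}$.
\item These points then become nondegenerate of indices $n+2$ and $n+3$, while every other critical point below the level of $\pm u_\e^{+\infty}$ has index at most $n+1$.
\item The homological argument based on the $(n+3)$-width (unstable disks $D^{n+3}$, boundaries $S^{n+2}$, $(n+3)$-sweepouts) forces a connecting orbit of the perturbed flow from $u_\e^{-\infty}$ to $u_\e^{+\infty}$.
\item Finally, let the perturbation tend to zero and identify the limits by Theorem \ref{lowenergy}.
\end{itemize}
This route also needs well-posedness of the perturbed flow in higher dimensions, where the coefficient of $\Delta u$ becomes $1+\delta\theta(u)$. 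Your plan never faces this because it never perturbs.

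\textbf{Other issues.} The Morse index of $u_\e^{-\infty}$ is $n+3$, not $n+4$. For multiplicity-one limits the Allen--Cahn index equals the Jacobi index; compare the ground states, whose index $1$ is that of the equator. Your ``sign/translation'' direction is just the constant Jacobi field of $T_p$, with eigenvalue $-2n$, which is already among the $n+3$. All the dimension counts in the topological argument depend on this.

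The singular-limit problem you leave open in Step 2 is closed in the paper without any regularity theory. Lemma \ref{density} gives multiplicity one, and the parity argument gives that the limit is cyclic mod $2$. Lemma \ref{minimal} then shows that any such stationary integral limit is an equator or $T_{\min}^n$. It argues by dimension reduction on tangent cones, using that $\mathrm{Area}(T_{\min}^n)/\mathrm{Area}(S^n)$ decreases in $n$ and stays below $2$.
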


\begin{thm}\label{flow}
There exist (infinitely many) eternal weak mean curvature flows in $S^{n+1}$ connecting $\mathcal{T}$ to equatorial spheres $S^n$. These flows are smooth for large $|t|$.
\end{thm}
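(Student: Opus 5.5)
We will prove Theorem \ref{flow} by passing to the sharp-interface limit $\e\to0$ in the eternal Allen--Cahn flows of Theorem \ref{Morse}, following the scheme of \cite{ChenGaspar23}. Fix a sequence $\e_j\to0$. For each $j$, take a ground state $u_{\e_j}^{+\infty}$ and the eternal solution $u_{\e_j}$ of Theorem \ref{Morse} connecting $u_{\e_j}^{-\infty}$ to it. Its energy is nonincreasing in $t$ and lies between $E_{\e_j}(u^{+\infty}_{\e_j})\to\sigma\,\mathrm{Area}(S^n)$ and $E_{\e_j}(u^{-\infty}_{\e_j})\to\sigma\,\mathrm{Area}(\mathcal T)$. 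Here $\sigma$ is the energy constant of a one-dimensional heteroclinic. Consider the associated energy measures $\mu^j_t=(\frac{\e_j}{2}|\nabla u|^2+\frac{W(u)}{\e_j})\,dx$. By Ilmanen's and Tonegawa's results (in the form used in \cite{ChenGaspar23}), after a subsequence these measures converge to an eternal integral Brakke flow $\{\mu_t\}_{t\in\R}$ in $S^{n+1}$ with $\mu_t(S^{n+1})\in[\sigma\mathrm{Area}(S^n),\sigma\mathrm{Area}(\mathcal T)]$.

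Next we identify the limits as $t\to\pm\infty$. By monotonicity of mass, $\mu_t$ subconverges as $t\to\pm\infty$ to stationary integral varifolds $V_{\pm}$. This is where Theorem \ref{classification} enters.

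The mass of $V_{-}$ is at most $\sigma\,\mathrm{Area}(T^n_{\min})$. For a nontrivial flow we want $V_-=\mathcal T$ and $V_+$ an equator, and we plan to get this by normalizing the time parameter. We choose the time translation of each $u_{\e_j}$ so that $E_{\e_j}(u_{\e_j}(\cdot,0))$ equals a fixed value $c$ strictly between the two limiting energies. Then $\mu_0(S^{n+1})=c$, and the flow is neither static at $\mathcal T$ nor at a sphere.

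We then use a classification of stationary integral varifolds in $S^{n+1}$ with mass at most $\mathrm{Area}(T^n_{\min})$. We want: the only ones are equators with multiplicity one and congruent copies of $T^n_{\min}$. For smooth ones this is Theorem \ref{classification}. The non-smooth case is handled by a min-max / Almgren--Pitts argument. A singular stationary varifold of mass at most the $(n+3)$-width would contradict $\omega_{n+3}=\mathrm{Area}(T_{\min}^n)$ via the index bound, or alternatively the monotonicity formula (density $\geq1$, and a singular cone has density $>1$). We expect to rely on the rigidity statement in \cite{OAI:The-Solomon-Yau-least-volume-theorem-September-23-2026} here.

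Given this gap, $\lim_{t\to-\infty}\mu_t(S^{n+1})$ and $\lim_{t\to+\infty}\mu_t(S^{n+1})$ lie in $\{\sigma\mathrm{Area}(S^n),\sigma\mathrm{Area}(\mathcal T)\}$, and they bracket $c$. So the backward limit is a copy of $T^n_{\min}$ and the forward limit an equator. Closeness of $u_\e(\cdot,t)$ to $u^{-\infty}_\e$ for very negative times, uniformly in $\e$, should pin the backward limit to $\mathcal T$ itself rather than a rotated copy. We expect this uniform closeness to be the main obstacle: without it the limiting backward varifold could drift along the $SO(n+2)$-orbit. We will try to control it using the nullity and integrability of Clifford hypersurfaces (all Jacobi fields come from rotations, \cite{hsiang1971minimal}) together with a Łojasiewicz-type estimate near $\mathcal T$.

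Smoothness for large $|t|$ follows from Brakke's / White's local regularity theorem. Near $t=\pm\infty$ the flow is varifold-close to a smooth multiplicity-one minimal hypersurface, so the Gaussian density ratios are close to $1$, and the flow is smooth there. Finally, infinitely many distinct flows are obtained in two ways. We vary the ground states $u^{+\infty}_\e$, whose limit equators range over all of $S^n$'s directions. We also compose with the isometries of $S^{n+1}$ fixing $\mathcal T$, as in \cite{ChenGaspar23}. The forward limits then differ, so the flows are distinct.
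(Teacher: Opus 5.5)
\textbf{The classification step fails.} You assert that every stationary integral varifold in $S^{n+1}$ of mass at most $\text{Area}(T^n_{\min})$ is a multiplicity-one equator or a copy of $T^n_{\min}$. You dispose of singular ones by an index bound or because a singular cone has density $>1$. This is false in low dimensions. Three great half-spheres in $S^{n+1}$ meeting at $120^\circ$ along a great $S^{n-1}$ form a stationary integral varifold of mass $\frac32\text{Area}(S^n)$. This is strictly below $\text{Area}(T^n_{\min})$ for $n=2,3$ (for $n=2$ it is $6\pi<2\pi^2$) and equal to it for $n=4$.

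None of your proposed arguments rules this out. Density $>1$ gives no contradiction, since the available bound $\Theta\le\text{Area}(T^n_{\min})/\text{Area}(S^n)$ is at least $3/2$ for $n\le4$. The $t\to\pm\infty$ limits of a Brakke flow carry no index bound. And $\omega_{n+3}=\text{Area}(T^n_{\min})$ says nothing about stationary varifolds of smaller mass.

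The missing idea is the mod $2$ cycle condition. Lemma \ref{minimal} assumes $\partial[T]=0$, which rules out such triple junctions. This property is inherited by links of tangent cones, so the dimension reduction closes down to the case $n=2$ of Choi--Mantoulidis. For your $V_\pm$ the hypothesis has to come from the Allen--Cahn structure, via the parity argument of Section 4.2 of \cite{ChenGaspar23}. There, Lemma \ref{density} forces multiplicity one, and the parity of the limiting density then exhibits the varifold as the reduced boundary of the limiting phase $\{u\to1\}$. Without this, your bracketing argument with the level $c$ does not identify $V_\pm$.

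\textbf{Uniqueness of the limits is flagged but not resolved.} The paper's route is built to handle this. It does not pass to the limit in the flows of Theorem \ref{Morse}. Instead it takes solutions $\mathscr S_\e(a)$ on the unstable manifold of the $G_1\times G_2$-invariant $u_\e^{-\infty}$. It identifies the unstable eigenspace with the normal variations $\nu,x_1\nu,\ldots,x_{n+2}\nu$ and chooses $a$ fixed by a large subgroup. This uses the factor-swap sign change for $n=2p$ and the semicircle connectedness argument for $n=2p+1$. The inherited symmetry leaves only finitely many compatible Clifford hypersurfaces and equators. That lets Propositions 3 and 4 of \cite{ChenGaspar23} identify the backward limit as $\mathcal T$ and the forward limit as a single equator, which Brakke's regularity theorem needs for smoothness at large $|t|$.

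Your flows carry no symmetry, so you must show directly that the subsequential limits as $t\to\pm\infty$ are unique. Which rotated copy of $\mathcal T$ arises is irrelevant, since a rotation fixes that. What would be needed is something like a \L{}ojasiewicz--Simon argument for the limiting flow near the integrable critical sets, not uniform-in-$\e$ closeness to $u_\e^{-\infty}$, and you do not supply it.

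Similarly, prescribing $u_\e^{+\infty}$ does not by itself determine the forward limit of the Brakke flow, since the limits $\e\to0$ and $t\to\infty$ need not commute. Your other device does work: once one flow exists, isometries preserving $\mathcal T$ give infinitely many, since they move every equator along an infinite orbit.
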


Note that for $n\geq4$, the family of Clifford hypersurfaces considered here is different from that in the author's previous paper \cite{Chen24}. For $n=3$, that paper gives an $S^1$ family of flows, while here we use a smaller symmetry group to obtain a larger $S^2$ family.

\subsection*{Outline of the proof}

We first extend Theorem \ref{classification} to low-area stationary integral varifolds which are cyclic mod $2$, using dimension reduction. Together with the convergence and rigidity results recalled below, this gives the low-energy classification for Allen--Cahn solutions. We then adapt the Morse perturbation and topological arguments of \cite{ChenGaspar25} to prove Theorem \ref{Morse}, and the convergence and symmetry arguments of \cite{ChenGaspar23,Chen24} to prove Theorem \ref{flow}. We explain the modifications needed to adapt the previous proofs to higher dimensions.

\subsection*{Acknowledgment}
The author acknowledges the support of University of Macau Research Grant MYRG-SRG2026-00039-FSC. The author thanks P. Gaspar for helpful comments on the early draft.

\section{Preliminaries and low-energy classification} \label{preliminary}

\subsection{Allen--Cahn equations and varifold convergence} \label{ACminimal}

\begin{defi}
Let $(M^{n+1},g)$ be a closed Riemannian manifold. We define the \emph{Allen-Cahn energy} on $M$ by:
\[E_{\epsilon}(u):= \int_M \left(\frac{\epsilon}{2} |\nabla_{g} u|^2 + \frac{1}{\epsilon} W(u)\right) d\mu_{g}, \ \ u \in W^{1,2}(M),\]
where $d\mu_g$ is the volume measure with respect to $g$.
\end{defi}
Throughout, $W$ is a smooth symmetric double-well potential satisfying the assumptions in Section $2.1$ of \cite{ChenGaspar25}, including quadratic growth and bounded $W^{\prime\prime}$. For example, one may modify $W(t)=\frac14(1-t^2)^2$ outside $[-2,2]$ as described there.

One can check that $u$ is a critical point of $E_{\epsilon}$ on a closed manifold $(M^{n+1},g)$ if and only if $u$ (weakly) solves the \emph{elliptic Allen-Cahn equation}:
\begin{equation} \label{AC}
\epsilon^2 \Delta_g u -  W'(u)=0 \quad \text{on} \ M. \tag{AC}
\end{equation}

We write $\sigma = \int_{-1}^1 \sqrt{W(t)/2}\,dt$, so that $2\sigma$ is the one-dimensional transition energy.

For the quadratic form given by the second variation of the energy $E_\e$ at $u$, we can define the \emph{Morse index} and the \emph{nullity} of a solution $u$ of \eqref{AC} (as a critical point of $E_\e$), denoted  $\ind_\e(u)$ and $\text{nul}_{\e}(u)$, as the number of negative eigenvalues and the dimension of the kernel of the linear operator $\mathcal{L}_{\e,u}(f) = \Delta f - \frac{W''(u)}{\e^2}f,$ counted with multiplicity, respectively. 

We note that given $\e>0$ and a sufficiently regular function $u$ on $M$ (so that almost every level set is a regular hypersurface), we can consider the \emph{associated $n$-varifolds} $V_{\e,u}$ defined by
    \begin{equation*}
        V_{\e,u}(\phi) = \frac{1}{2}\int_{M\cap \{\nabla u \neq 0\}} \phi(x,T_x\{u=u(x)\})\cdot\left(\frac{\e|\nabla u(x)|^2}{2}+\frac{W(u(x))}{\e}\right)\,d\mu_g(x)
    \end{equation*}
for any continuous function $\phi$ on the Grassmann bundle $G_n(TM)$ of $n$-planes in the tangent spaces of $M$. We write $\mu_{\e,u}=\|V_{\e,u}\|$ for the associated Radon measure on $M$ (the \emph{weight measure} of $V_{\e,u}$). In the case where $\{u_j\}$ are solutions to \eqref{PAC}, with $\e = \e_j \downarrow 0$, we will write ${V_{\e_j,t}=V_{\e_j,u_j(\cdot,t)}}$ and ${\mu_{\e_j,t} := \mu_{\e_j,u_j(\cdot,t)}}$.

We have the following convergence results for solutions to \eqref{AC} and \eqref{PAC}.

\begin{thm*}[\cite{HT,TW,Guaraco}]
Let $(M^{n+1},g)$ be a closed Riemannian manifold. Let $\{u_j\}$ be a sequence of solutions of \eqref{AC} with $\e=\e_j \downarrow 0$. Suppose that $\sup_j E_{\e_j}(u_j) < \infty$. Then we can find a (not relabeled) subsequence of $u_j$ such that $V_{\e_j}$ converge to a stationary $n$-varifold $V$ on $M$ such that $\frac{1}{\sigma}V$ is integral. Moreover,
\[\frac{1}{\sigma}\|V\|(M) = \lim_{j \to \infty} \frac{1}{\sigma} \|V_{\e_j}\|(M) = \lim_{j \to \infty}\frac{1}{2\sigma}E_{\e_j}(u_{j}),\]
and $u_{j}$ converges uniformly to $\pm 1$ in compact subsets of $M \setminus \supp \|V\|$.

Furthermore, if $n\geq 2$ and if $\sup_j \ind_{\e_j}(u_j)<\infty$, then $\supp\|V\|$ is a smooth, embedded, minimal hypersurface in $M$ away from a closed set of Hausdorff dimension $\leq (n-7)$.
\end{thm*}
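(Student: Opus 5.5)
This result is quoted from the literature, and I would reconstruct it from \cite{HT,TW,Guaraco} in three stages: compactness and stationarity, integrality, and regularity under an index bound. \textbf{Compactness and stationarity.} Since $\|V_{\e_j}\|(M)\le \frac12 E_{\e_j}(u_j)$ is uniformly bounded, weak-* compactness of Radon measures on the compact Grassmann bundle $G_n(TM)$ gives a subsequence with $V_{\e_j}\to V$. For stationarity I would use the stress--energy tensor
\[ T_{\e}(u)=\Big(\tfrac{\e}{2}|\nabla u|^2+\tfrac{W(u)}{\e}\Big)g-\e\,\nabla u\otimes\nabla u. \]
Equation \eqref{AC} is equivalent to $\operatorname{div}T_\e(u)=0$. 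Testing against a vector field $X$ then writes the first variation $\delta V_{\e_j}(X)$ as an integral of $\nabla X$ against the \emph{discrepancy measure}
\[ \xi_{\e_j}=\Big(\tfrac{\e_j}{2}|\nabla u_j|^2-\tfrac{W(u_j)}{\e_j}\Big)\,d\mu_g, \]
up to a contribution from the set $\{\nabla u_j=0\}$, which is also controlled by $\xi_{\e_j}$. So stationarity of $V$ follows once $|\xi_{\e_j}|\to0$ as measures.

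\textbf{Vanishing discrepancy, monotonicity and energy identity.} This is the main obstacle. Following Hutchinson--Tonegawa, I would first prove a gradient bound $\e|\nabla u|\le C$ and $|u|\le 1+o(1)$, using the maximum principle and bounded $W''$. Next, rescaling to unit scale, I would derive a Modica-type one-sided estimate $\xi_+\to0$ by a maximum-principle argument on $\frac{\e}{2}|\nabla u|^2-\frac{W(u)}{\e}$, with curvature errors of order $\e$. With this, the almost-monotonicity formula for $r^{-n}\mu_{\e}(B_r(x))$ holds: its error term involves only $\xi_+$. Monotonicity gives uniform density bounds, which rule out $\xi_-$ concentrating. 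Together these give $|\xi_{\e_j}|\to0$, and hence the energy identity $\|V\|(M)=\lim\frac12 E_{\e_j}(u_j)$.

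\textbf{Integrality.} I would blow up at $\|V\|$-generic points. There the tangent plane exists and the density is finite, and the rescaled solutions converge to sums of one-dimensional heteroclinic profiles. Each profile carries energy exactly $2\sigma$, so $\frac1\sigma$ times the density is an integer and $\frac1\sigma V$ is integral.

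\textbf{Uniform convergence off the support.} I would use a clearing-out lemma: if $\mu_\e(B_r(x))$ is small relative to $r^n$, then $|u|$ is close to $1$ on $B_{r/2}(x)$. Off $\supp\|V\|$ this forces $u_j$ close to $\pm1$, and linearizing at the wells gives exponential convergence on compact subsets.

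\textbf{Regularity.} For the final claim I would invoke Tonegawa--Wickramasekera: limits of stable solutions have support smooth and embedded away from a set of dimension $\le n-7$. With $\sup_j\ind_{\e_j}(u_j)\le I$, a standard covering argument (Guaraco, Gaspar) shows $u_j$ is stable on every open set outside at most $I$ points of concentration. This gives regularity away from those finitely many points. Removability at the isolated points then follows from the stationarity of $V$ together with the local stability of $V$ on punctured balls. Combined with the Schoen--Simon/Wickramasekera estimates, this yields the stated Hausdorff dimension bound, and for $n\le 6$ (e.g. $n=2$) a smooth support.
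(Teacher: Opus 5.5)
The paper gives no proof of this statement; it is imported from \cite{HT,TW,Guaraco}. Your outline follows those sources in the same order: the stress--energy tensor and discrepancy for stationarity, the Hutchinson--Tonegawa one-sided discrepancy bound and almost-monotonicity, clearing-out off the support, Tonegawa--Wickramasekera for stable limits, and Guaraco's covering argument with removal of finitely many points under an index bound. As a sketch these parts are sound.

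The genuine gap is the integrality step. Your claim that at a generic point ``the rescaled solutions converge to sums of one-dimensional heteroclinic profiles'' does not hold at any single scale, and this is exactly the difficulty the integrality theorem has to overcome. If you rescale by $r_j$ with $\e_j/r_j\to0$, you see only $u_j\to\pm1$ off the tangent plane $P$ and the varifold limit $\theta\,|P|$, with no profile information. If you rescale by $\e_j$ around a point where $|u_j|\le 1-b$, you see a single entire solution of $\Delta\bar u=W'(\bar u)$ on $\R^{n+1}$. That solution need not be one-dimensional, since saddle solutions already have the admissible energy growth. Any further layers at distances between $\e_j$ and $r_j$ are invisible at both scales. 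Integrality would fail if energy were stored in such intermediate configurations, or if the layer count fluctuated over $P$ so that only its average equals $\theta/\sigma$. The observation that each heteroclinic carries $2\sigma$ rules out neither.

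What Hutchinson--Tonegawa actually use is an extra input plus a multi-scale argument.
\begin{itemize}
\item At a point where $V$ has tangent plane $P$, convergence of the Grassmannian part of the varifolds gives more than $|\xi_{\e_j}|\to0$: after blow-up, the tangential energy $\e_j|\nabla_P u_j|^2$ also tends to $0$ in $L^1$.
\item A Vitali covering (maximal function) argument then makes both quantities small relative to $s^n$ on balls of every radius $s\in[\e_j,1]$, centered at most layer points.
\item At such points the $\e_j$-blow-ups converge to entire solutions with zero discrepancy and zero tangential gradient. These are single heteroclinics with $\frac12|\bar u'|^2=W(\bar u)$ and energy exactly $2\sigma$; this is where one-dimensional profiles legitimately enter.
\item One then slices along good normal lines. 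There equipartition makes the energy approximately $\int\sqrt{2W(u)}\,|\partial_\nu u|$, i.e.\ $2\sigma$ per transition.
\item Finally one applies the almost-monotonicity formula at layer points across all intermediate scales. This shows $\theta$ is $\sigma$ times an integer number of layers.
\end{itemize}
Your outline needs this argument in place of the one-line blow-up claim.

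A smaller point of the same kind: density bounds alone do not make $\xi_-$ vanish. What does is that $-\xi_\e$ enters the monotonicity identity with a favorable sign. Hence the limit satisfies $\int_0^1 r^{-n-1}|\xi|(B_r(x))\,dr<\infty$. Together with the lower density bound, this forces the density of $|\xi|$ with respect to the limit energy measure to vanish almost everywhere.
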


The minimal surface $\supp\|V\|$ is often called a \emph{limit interface} obtained from $u_j$. Solutions with limit interface $\Gamma$ are called the \emph{Allen-Cahn approximation} of $\Gamma$.

We state below the main convergence result we will use in the present article, which follows from \cite{ilmanen1993convergence} and the work of Tonegawa \cite{tonegawa2003integrality} (see also \cite{sato2008simple} and \cite{takasao2016existence}):

\begin{thm*}
Let $(M^{n+1},g)$ be a closed Riemannian manifold. Let $\{u_j\}$ be a sequence of solutions to \eqref{PAC} on $M \times [t_0,\infty)$ with $\e=\e_j \downarrow 0$. Suppose that there exist constants $c_0,E_0>0$ such that
\begin{enumerate}
    \item[(a)] $\sup_{M\times [t_0,\infty)}|u_j| \leq c_0$, for all $j$,
    \item[(b)] $E_{\e_j}(u_j(\cdot,t)) \leq E_0$, for all $t\geq t_0$ and all $j$, and
    \item[(c)] $\int_{M\times(t_0,\infty)}\e_j|\partial_t u_j|^2\,d\mu_g\,dt \leq E_0$, for all $j$.
\end{enumerate}
Then, passing to a subsequence (not relabeled), there are Radon measures $\{\mu_t\}_{t\geq t_0}$ such that
\begin{enumerate}
    \item[(i)] $\mu_{\e_j,t} \to \mu_t$  as Radon measures on $M$, and
    \[\frac{1}{2\sigma}\lim_{j \to\infty} E_{\e_j}(u_j(\cdot,t)) = \frac{1}{\sigma}\lim_{j \to \infty}\|\mu_{\e_j,t}\|(M) = \frac{1}{\sigma}\|\mu_t\|(M),\]
    for every $t>t_0$.
    \item[(ii)] For a.e. $t>t_0$, $\mu_t$ is $n$-rectifiable, and its density is $N(x)\sigma$, for $\mu_t$-a.e. $x \in M$, where $N(x)$ is a nonnegative integer.
    \item[(iii)] $\mu_t$ satisfies the mean curvature flow in the sense of Brakke, namely:
    \[\overline{D}_t \int_M \phi\,d\mu_t \leq \int_M (-\phi)\|H_t\|^2 + \langle\nabla \phi, H_t \rangle \, d\mu_t,\]
    for any $C^2$ function $\phi\geq 0$. Here $\overline{D}_t$ denotes the upper derivative, and $H_t$ is the generalized mean curvature vector of $\mu_t$.
\end{enumerate}
\end{thm*}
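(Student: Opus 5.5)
This convergence theorem for \eqref{PAC} is not proved in this note; it is assembled from \cite{ilmanen1993convergence,tonegawa2003integrality,sato2008simple,takasao2016existence}. I would therefore reproduce the standard Ilmanen--Tonegawa scheme on a closed manifold and check that hypotheses (a)--(c) are exactly what it needs.

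\textbf{Step 1: compactness and (i).} The energy identity
\[\frac{d}{dt}E_{\e_j}(u_j(\cdot,t)) = -\int_M \e_j|\partial_t u_j|^2\,d\mu_g\]
makes $t\mapsto E_{\e_j}(u_j(\cdot,t))$ nonincreasing, and by (b) it is bounded by $E_0$.

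\begin{itemize}
\item For a fixed $C^2$ function $\phi\ge0$, I would show that $t\mapsto \mu_{\e_j,t}(\phi)-C_\phi t$ is nonincreasing, with $C_\phi$ independent of $j$. This uses the first-variation computation along \eqref{PAC}, bounded geometry of $M$, and the bound (b).
\item Helly's selection theorem along a countable dense family of test functions, combined with a diagonal argument, then gives a subsequence with $\mu_{\e_j,t}\to\mu_t$ for every $t$.
\item The identity between the limit energy and the total mass of $\mu_t$ is immediate from the definitions, since $\mu_{\e,u}(M)=\frac12E_\e(u)$ up to the set $\{\nabla u=0\}$. That set contributes only through the discrepancy term.
\end{itemize}

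\textbf{Step 2: vanishing discrepancy.} Set
\[\xi_{\e}=\frac{\e|\nabla u|^2}{2}-\frac{W(u)}{\e}.\]
The key step is to show that $|\xi_{\e_j}|\to0$ as measures on $M\times(t_0,\infty)$. The argument has three parts.

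\begin{itemize}
\item Using (a) and parabolic regularity at scale $\e_j$, obtain uniform $C^{1,\alpha}$ bounds on the rescaled solutions.
\item Prove a positive-part bound $\xi_\e\le C\e^{-1+\beta}$, in the style of Ilmanen's maximum-principle argument. On a manifold this requires a Bochner-type computation with curvature error terms.
\item Combine the bound with a monotonicity formula for Huisken's backward heat kernel, localized and with curvature errors as in Ilmanen's Riemannian version. This yields an upper density bound and shows that the negative part of $\xi$ also vanishes in the limit.
\end{itemize}

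\textbf{Step 3: Brakke inequality (iii).} Once $\xi\to0$ is known, I would pass to the limit in
\[\frac{d}{dt}\int\phi\,d\mu_{\e,t}=\int\Big(-\e\phi(\partial_tu)^2+\e\,\partial_tu\,\nabla\phi\cdot\nabla u\Big)\,d\mu_g.\]
The first variation of $V_{\e_j,t}$ is expressed through $\e\Delta u-W'/\e$ plus discrepancy terms, and these discrepancy terms disappear in the limit. Lower semicontinuity, using the $L^2$ bound (c) on $\e|\partial_tu|^2$, then identifies the limit of $\int\e(\partial_tu)^2\phi$ with at least $\int\phi|H|^2\,d\mu_t$ for a.e.\ $t$. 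This gives the Brakke inequality.

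\textbf{Step 4: rectifiability and integrality (ii).} For a.e.\ $t$, Fatou's lemma together with (c) gives
\[\liminf_j\int\e_j(\partial_tu_j)^2\,d\mu_g<\infty.\]
Along a further subsequence depending on $t$, $u_j(\cdot,t)$ then solves an Allen--Cahn equation whose forcing term is bounded in $L^2$ in the appropriate scaled sense. Tonegawa's integrality theorem for such forced solutions \cite{tonegawa2003integrality}, generalized to manifolds by Sato and Takasao--Tonegawa, gives that $\mu_t$ is rectifiable with density $N(x)\sigma$ and $N(x)\in\Z_{\ge0}$.

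I expect Step 2, the vanishing of the discrepancy together with the density bounds on a curved closed manifold, to be the main obstacle. I would handle it by following Ilmanen's localized monotonicity with curvature error terms, which are controlled uniformly by (a) and (b).
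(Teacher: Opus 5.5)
Your outline takes the same route as the paper, which gives no proof of its own and simply quotes Ilmanen's convergence theory together with Tonegawa's integrality theorem. Steps 1, 3 and 4 are the standard ones. The gap is in Step 2, at the point you yourself single out. The Bochner computation behind Ilmanen's maximum principle gives, on $\{\nabla u\neq0\}$,
\[
(\partial_t-\Delta)\xi_\e\le-\frac{|\nabla\xi_\e|^2+2\e^{-1}W'(u)\langle\nabla u,\nabla\xi_\e\rangle}{\e|\nabla u|^2}-\e\,\mathrm{Ric}(\nabla u,\nabla u).
\]
So the maximum principle can only \emph{propagate} an upper bound on the discrepancy; it does not create one. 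On a flat manifold $\max\xi_\e$ is merely nonincreasing. Where $\mathrm{Ric}$ is negative, the last term is a source of size up to $\e|\nabla u|^2\sim\e^{-1}$.

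Hypotheses (a)--(c) say nothing about $\xi_{\e_j}(\cdot,t_0)$. Hypothesis (a) together with parabolic estimates at scale $\e_j$ gives only $\xi_{\e_j}\le C\e_j^{-1}$ for $t\ge t_0+\e_j^2$. That is the borderline case $\beta=0$ of your bound, and it is too weak for the almost-monotonicity. The cited works get the bound from an assumption on the initial data; for example, Ilmanen \cite{ilmanen1993convergence} takes $\xi_\e\le0$ at the initial time. So your Step 2 needs such an extra input. On a general $M$ it also needs a modified quantity that absorbs the curvature source.

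In the setting where the paper actually applies the theorem, this input is available. There the solutions are eternal on the round $S^{n+1}$ with $|u|\le c_0$ for all $t\in\R$, so $\e|\nabla u|\le C$ and $\xi_\e\le C\e^{-1}$ at all times. Since $\mathrm{Ric}=n\,g$, at a positive spatial maximum of $\xi_\e$ we have $\nabla u\neq0$, $\nabla\xi_\e=0$ and $\Delta\xi_\e\le0$. The display then gives
\[
\partial_t\xi_\e\le-n\e|\nabla u|^2=-2n\xi_\e-2nW(u)/\e\le-2n\xi_\e.
\]
Hence $\max\xi_\e^+(\cdot,t)\le C\e^{-1}e^{-2n(t-t_1)}$ for every $t_1<t$. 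Letting $t_1\to-\infty$ yields $\xi_\e\le0$ everywhere, which is exactly Ilmanen's condition, and your Steps 2--4 then run as in the references.

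Separately, correct the sign in your Step 3 formula. Integrating $\e\phi\nabla u\cdot\nabla\partial_tu$ by parts gives the cross term $-\e\,\partial_tu\,\nabla\phi\cdot\nabla u$. With your sign, the limit would produce $-\langle\nabla\phi,H_t\rangle$ in Brakke's inequality.
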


\subsection{Previous results} \label{previous result}

We recall the results and techniques from our joint papers \cite{ChenGaspar23,ChenGaspar25} with P. Gaspar that will be used below.

For a solution $u_{\e}^{-\infty}$ on $S^{n+1}$ to \eqref{AC} with Morse index $I$, by the result of Choi-Mantoulidis \cite{choimantoulidis22}, where they show the existence of ancient gradient flows and uniqueness under integrability conditions, and the theory of parabolic PDEs, we show that there is a family of gradient flows of $E_{\e}$,
\begin{align*}
\mathscr{S}_{\e}: B_{\eta} \subset \R^I \to C^2(S^{n+1} \times \R)
\end{align*}
such that $\lim_{t\to-\infty}\mathscr S_\epsilon(a)(\cdot,t)=u_\e^{-\infty}$ for every $a\in B_\eta$. The curve $t \to \mathscr{S}_{\e}(a)(\cdot, t)$ is tangent at $u_{\e}^{-\infty}$ to eigenfunctions $\{\varphi_j\}_{j=1}^I$ of 
\begin{align*}
D^2E_{\e}(u_{\e}^{-\infty}) = \e \Delta - \frac{1}{\e} W''(u_{\e}^{-\infty})
\end{align*}
with coefficients $a = (a_1, \cdots, a_I)$.

When $I \geq 2$, for small $r < \eta$, using the fact that the first eigenfunction $\varphi_1 > 0$, the maximum principle for parabolic equations, and the Frankel-type property proved by Hiesmayr \cite{hiesmayr2020rigidity}, we prove that $\mathscr{S}_{\e}(\pm r, 0, \cdots, 0)$ converge to $\pm 1$ as $t \to +\infty$.

As shown in \cite{ChenGaspar23}, the sets of initial data whose solutions converge to $1$ and $-1$ are open. By connectedness, every path in $B_\eta\setminus\{0\}$ joining $(r,0,\ldots,0)$ to $(-r,0,\ldots,0)$ contains a point $a$ such that $\mathscr{S}_\e(a)$ does not converge to either constant. For the solutions constructed on $S^3$ in \cite{ChenGaspar23}, the low-area classification and a parity argument identify the backward and forward limit interfaces as a Clifford torus and an equator, respectively, both for the Allen--Cahn flow and for its limiting Brakke flow. The uniqueness theorem for ancient gradient flows in \cite{choimantoulidis22} also gives the symmetry argument used in \cite{ChenGaspar23}: if an isometry preserving $u_\e^{-\infty}$ fixes the parameter $a$ in the unstable eigenspace, it preserves $\mathscr{S}_\e(a)(\cdot,t)$, its limiting Brakke flow, and their stationary limits. Together with a continuity argument, this yields a two-parameter family of Brakke flows on $S^3$.

In \cite{ChenGaspar25}, we construct Allen--Cahn flows on $S^3$ with prescribed limits using Morse theory. We perturb the energy near its low-energy critical manifolds, choosing auxiliary Morse functions with maxima at the prescribed ground states and minima at the prescribed Allen--Cahn approximations of the Clifford torus. A homological argument based on the fifth Allen--Cahn width produces connecting orbits for the perturbed flow. Letting the perturbation tend to zero and identifying the limiting endpoints then gives the desired connections.

\subsection{Low-area varifolds and Allen--Cahn rigidity}

We have the following density estimate for stationary integral varifolds.

\begin{lem}\label{density}
Let $V$ be an $n$-dimensional stationary integral varifold in $S^{n+1}$. Then
\[
\Theta^n(V,x)\leq\frac{\|V\|(S^{n+1})}{\text{Area}(S^n)}
\qquad\text{for every }x\in\supp\|V\|,
\]
where $\Theta^n(V,x)$ denotes the $n$-dimensional density of $V$ at $x$. In particular, if $\|V\|(S^{n+1})<2\text{Area}(S^n)$, then $V$ has multiplicity one.
\end{lem}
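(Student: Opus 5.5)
The plan is to apply the monotonicity formula for stationary varifolds in the round sphere, via the cone construction in $\R^{n+2}$. Let $V$ be stationary in $S^{n+1}$ and fix $x\in\supp\|V\|$. First, I would pass to the cone $CV=\{\lambda y: y\in\supp V,\ \lambda>0\}$ weighted accordingly, an $(n+1)$-dimensional stationary integral varifold in $\R^{n+2}$; stationarity of the cone is the standard fact that a varifold in $S^{n+1}$ is stationary if and only if its cone is stationary in Euclidean space. The density of $CV$ at the origin equals $\|V\|(S^{n+1})/\text{Area}(S^n)$, since the mass of $CV$ in the unit ball is $\|V\|(S^{n+1})/(n+1)$ and the unit $(n+1)$-ball has volume $\text{Area}(S^n)/(n+1)$.

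Next I would use Euclidean monotonicity at the point $x$ on the unit sphere. By upper semicontinuity of density and cone invariance, $\Theta^{n+1}(CV,x)=\Theta^n(V,x)$, because near $x$ the cone is locally a product of $V$ with a line. By monotonicity centered at $x$, $\Theta^{n+1}(CV,x)\leq \lim_{R\to\infty}\frac{\|CV\|(B_R(x))}{\omega_{n+1}R^{n+1}}$. Since $B_R(x)\subset B_{R+1}(0)$ and $CV$ is a cone, this limit is at most the density at infinity, which equals the density at $0$, namely $\|V\|(S^{n+1})/\text{Area}(S^n)$. This gives the inequality.

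For the multiplicity-one claim, note that $V$ is integral, so $\Theta^n(V,x)$ is an integer at $\|V\|$-a.e. $x$. If the mass is less than $2\,\text{Area}(S^n)$, then $\Theta^n<2$ everywhere on the support, so a.e. density equals $1$.

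The only delicate step is justifying $\Theta^{n+1}(CV,x)=\Theta^n(V,x)$ and the stationarity of the cone. Both are standard; see Simon's book, where one checks the first variation using radial and tangential decompositions. Alternatively, one could directly use the monotonicity formula in $S^{n+1}$ (the sphere-adapted formula that $\|V\|(B_r(x))/\text{Area}(S^n\cap B_r)$-type ratios are monotone up to the antipodal point). That gives the same bound by letting the geodesic radius tend to $\pi$, where the ball covers $S^{n+1}$ and the comparison is with a full equatorial sphere.
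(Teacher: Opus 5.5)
Your proposal is correct and is essentially the paper's argument. Both pass to the cone over $V$ in $\R^{n+2}$, apply Euclidean monotonicity centered at $x$, use $B_R(x)\subset B_{R+1}(0)$ to compare with the constant density of the cone, and invoke integrality for the multiplicity-one claim. One small correction: the identity $\Theta^{n+1}(CV,x)=\Theta^n(V,x)$ comes from the radial product structure of the cone, not from upper semicontinuity of density, but this does not affect the argument.
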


\begin{proof}
The volume of the $n+1$ dimensional ball with radius $r$ is $\text{Vol}(B_r^{n+1}) = \frac{r^{n+1}}{n+1} \text{Area}(S^n)$.

Let $C=0\#V$ be the stationary cone over $V$ in $\R^{n+2}$. Its radial product structure gives $\Theta^{n+1}(C,x)=\Theta^n(V,x)$ for $x\in\supp\|V\|\subset S^{n+1}$ and $\|C\|(B_R(0))=\frac{R^{n+1}}{n+1}\|V\|(S^{n+1})$.

Fix $x\in\supp\|V\|\subset S^{n+1}$. Since $|x|=1$, we have $B_R(x)\subset B_{R+1}(0)$. By the monotonicity formula, the mass ratio centered at $x$ is nondecreasing in $R$. Hence
\[
\Theta^{n+1}(C,x) \leq \lim_{R\to\infty}\frac{(n+1)\|C\|(B_R(x))}{R^{n+1}\text{Area}(S^n)} \leq \lim_{R\to\infty}\frac{(n+1)\|C\|(B_{R+1}(0))}{R^{n+1}\text{Area}(S^n)} =\frac{\|V\|(S^{n+1})}{\text{Area}(S^n)}.
\]

The last assertion follows from integrality, since under the stated area bound $\Theta^n(V,x)$ is a positive integer strictly less than $2$ for $\|V\|$-almost every $x$.
\end{proof}

We extend Theorem \ref{classification} to stationary integral varifolds by dimension reduction.

\begin{lem}\label{minimal}
Let $T$ be an $n$-dimensional stationary integral varifold in $S^{n+1}$. If $\text{Area}(T) \leq \text{Area}(T_{min}^n)$ and its associated $\Z_2$ chain $[T]$ has $\partial [T] = 0$, then $T$ is a multiplicity one equatorial sphere or a Clifford hypersurface $T_{min}^n$.
\end{lem}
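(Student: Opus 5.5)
Since $\text{Area}(T_{min}^n)<2\,\text{Area}(S^n)$, Lemma \ref{density} applies: $T$ has multiplicity one, and every point of $\supp\|T\|$ has density strictly less than $2$. The remaining task is to show that $\Sigma=\supp\|T\|$ is a smooth embedded minimal hypersurface. Once that is known, Theorem \ref{classification} finishes the proof: either $\Sigma$ is totally geodesic, hence an equatorial sphere (a connected totally geodesic closed hypersurface of $S^{n+1}$), or $\Sigma$ is non-totally geodesic with $\text{Area}(\Sigma)\le\text{Area}(T_{min}^n)$. In the second case the least-area statement forces equality, and the rigidity statement gives $\Sigma\cong T_{min}^n$.

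Disconnected $\Sigma$ are ruled out by the Frankel property. Any two closed minimal hypersurfaces in $S^{n+1}$ intersect, so two disjoint components cannot exist. Alternatively, each component has area at least $\text{Area}(S^n)$, so two components would exceed $\text{Area}(T_{min}^n)$.

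For regularity I plan to argue by contradiction via dimension reduction. Let $\mathrm{sing}\,T$ be the singular set, and suppose it is nonempty. At a singular point $x$, take a tangent cone $C$. It is a stationary integral cone in $\R^{n+1}$ with density $\Theta(C,0)=\Theta(T,x)<2$, and it inherits $\partial[C]=0$ mod $2$ from $T$, since the mod $2$ cycle condition passes to blow-up limits by the compactness of flat chains mod $2$. Following Federer's reduction, I will choose a singular point of $C$ away from the origin and blow up again. Iterating, I reach a stationary integral cone $C'=\R^{n-m}\times C_0$, where $C_0\subset\R^{m+1}$ is a cone regular away from $0$, and $C'$ still has density $<2$ and is a mod $2$ cycle. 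The link $L=C_0\cap S^m$ is then a smooth closed minimal hypersurface of $S^m$ with $\text{Area}(L)=\Theta(C_0,0)\,\text{Area}(S^{m-1})$.

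The key step, and the expected main obstacle, is to compare the density of $C_0$ with that of the minimal Clifford cone in that dimension. I want to show $\Theta(C_0,0)\ge\text{Area}(T_{min}^{m-1})/\text{Area}(S^{m-1})$ whenever $C_0$ is not a plane, since $L$ is non-totally geodesic. I then compare this with the density ratio in the original dimension, $\text{Area}(T_{min}^n)/\text{Area}(S^n)$. Theorem \ref{classification} bounds $\text{Area}(L)$ from below in $S^m$ when $m-1\ge2$. When $m-1=1$, $L$ is a union of geodesic arcs, and the mod $2$ condition excludes triple junctions (odd-order junctions), so $C_0$ must be a union of planes through a line. Density $<2$ then forces a single plane, which is a contradiction. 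So the concrete computation is the monotonicity in $k$ of the normalized Clifford areas,
\[
\frac{\text{Area}(T_{min}^k)}{\text{Area}(S^k)}\ \text{is nonincreasing in } k,
\]
which is checked from explicit Gamma-function formulas (these ratios decrease toward $\sqrt2$). Using $\Theta(T,x)\le\text{Area}(T)/\text{Area}(S^n)\le\text{Area}(T_{min}^n)/\text{Area}(S^n)$ (cone density at a point is bounded via the cone argument of Lemma \ref{density}), I obtain
\[
\Theta(C_0,0)\le \frac{\text{Area}(T_{min}^n)}{\text{Area}(S^n)}\le\frac{\text{Area}(T_{min}^{m-1})}{\text{Area}(S^{m-1})},
\]
with $m-1<n$. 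Equality cannot hold in the rigidity case, because equality at $x$ in monotonicity would force $T$ itself to be a cone in $S^{n+1}$, which is impossible with a singular vertex together with $\text{Area}(T)\le\text{Area}(T_{min}^n)$. Strictness of the monotonicity in $k$ would suffice if it holds, so this inequality, and in particular its strict form, is where I expect the real work to be. The conclusion is that $\Sigma$ is smooth, and the first paragraph applies.
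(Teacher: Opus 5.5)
Your argument is correct, provided the normalized Clifford areas decrease strictly, which you flag as the key input. It is organized differently from the paper's proof. The paper inducts on $n$, taking $n=2$ from Lemma 5.8 of \cite{choimantoulidis22}. In the inductive step it blows up the cone $0\#T$ only once, at a singular point $x_0\neq 0$, so the tangent cone is $\overline{C}\times\R$. It then applies the lemma itself (the induction hypothesis, valid for possibly singular varifolds) to the link of $\overline{C}$. That link is a mod $2$ cycle of area strictly below $\text{Area}(T_{min}^n)$, hence a multiplicity one equator, which contradicts Allard.

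You instead run Federer's reduction to the end, reaching $\R^{n-m}\times C_0$ with an isolated singularity. You then apply Theorem~\ref{classification} directly to the smooth link of $C_0$ when $m\geq 3$, and treat $m\leq 2$ by hand. Your route avoids the induction and the base-case citation. The price is checking that the reduction terminates (the spine grows at each step, and by Allard no stage is a multiplicity one plane) and doing the low-dimensional analysis yourself; the paper's single blow-up lets the induction hypothesis absorb all of that. Both routes rest on the same numerical fact, the strict decrease of $\text{Area}(T_{min}^k)/\text{Area}(S^k)$ in $k$, which the paper takes from Theorem~3 of \cite{Chen24}.

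Three corrections. First, strictness is essential, and your fallback for the equality case does not work. If the ratios in dimensions $n-1$ and $n$ were equal, the spherical suspension of $T_{min}^{n-1}$ would be a stationary mod $2$ cycle in $S^{n+1}$ of area exactly $\text{Area}(T_{min}^n)$, with singular vertices. So ruling out such cones is no easier than the strict inequality itself.

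Second, your bottom case is mis-indexed. For $m=2$ with $C_0$ regular away from $0$, the link is a single great circle and no mod $2$ argument is needed. The mod $2$ hypothesis is used at $m=1$, where $C_0$ is a union of rays in $\R^2$. There, density $<2$ bounds the total multiplicity by $3$, and evenness forces a multiplicity one line; this is what excludes $\R^{n-1}\times Y$.

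Third, passing $\partial[\,\cdot\,]=0$ to blow-ups needs a closure theorem for varifolds that are cyclic mod $2$ (White's). Compactness of flat chains alone only produces some flat limit, not necessarily the chain of the limit varifold. The paper is equally brief on this point.
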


\begin{proof}

We use an inductive argument. The case for $n = 2$ has been proved by Choi-Mantoulidis in Lemma $5.8$ of \cite{choimantoulidis22}. Now we prove the lemma for a $(n+1)-$dimensional stationary varifold $T$ in $S^{n+2}$ satisfying all conditions specified in the lemma, assuming that the lemma holds for the $n$-dimensional case.

If $T$ is smooth, Lemma \ref{density} and the area bound imply that it has one component with multiplicity one. The result then follows from Theorem \ref{classification}.

If $T$ were singular, then the $(n+2)-$dimensional stationary cone $C^{n+2}: = 0 \# T$ would be one where the origin is not an isolated singularity. Let $x_0 \neq 0$ be a singular point of $C^{n+2}$. By the proof of Lemma \ref{density} and Theorem $3$ of \cite{Chen24}, $\Theta^{n+2}(C^{n+2},x_0) \leq \frac{\text{Area}(T_{min}^{n+1})}{\text{Area}(S^{n+1})} < \frac{\text{Area}(T_{min}^{n})}{\text{Area}(S^{n})} < 2$.

Let $\overline{C}^{n+2}$ be a tangent cone to $C^{n+2}$ at $x_0$. Automatically, $\overline{C}^{n+2} \cong \overline{C}^{n+1} \times \R$ for some stationary $n+1$ dimensional cone $\overline{C}^{n+1} \subset \R^{n+2}$. Let $T^{n} \subset S^{n+1}$ be the link of $\overline{C}^{n+1}$. By construction, it is easy to see that $\partial[T^n]=0$. It has
\[\text{Area}(T^{n}) = \text{Area}(S^n) \Theta^{n+1}(\overline{C}^{n+1},0) = \text{Area}(S^n) \Theta^{n+2}(C^{n+2},x_0) < \text{Area}(T_{min}^{n}).\]

Thus $T^n$ satisfies all the assumptions of the induction hypothesis. By the $n-$dimensional case, we know that $T^n \simeq S^n$ with multiplicity one, so $\overline{C}^{n+1} \simeq \R^{n+1}$ with multiplicity one, and $\overline{C}^{n+2} \simeq \R^{n+2}$ with multiplicity one. This violates the singular nature of $x_0 \in C^{n+2}$ by Allard's theorem \cite{simon1983lectures}.

\end{proof}

By \cite{GG,Dey}, the Allen--Cahn widths $\omega_k(E_\e)$ satisfy $\lim\limits_{\e\downarrow0}\frac{\omega_k(E_\e)}{2\sigma} =\omega_k(S^{n+1}).$ We have the following classification result for low energy solutions.

\begin{thm} \label{lowenergy}
For sufficiently small $\epsilon$, let $u$ be a nonconstant solution of \eqref{AC} with energy $E_\e(u) \leq \omega_{n+3}(E_\e)$. Then either
\begin{itemize}
    \item[(i)] If $E_\e(u)<\omega_{n+3}(E_\e)$, then $u$ is a radially symmetric solution of Morse index 1, energy $E_\e(u) = \omega_1(E_\e) = \ldots = \omega_{n+2}(E_\e)$ and vanishes precisely on an equatorial sphere.
    \item[(ii)] If $E_\e(u)=\omega_{n+3}(E_\e)$, then up to an isometry $u$ is a $\mathrm{SO}(p+1)\times \mathrm{SO}(n-p+1)$-symmetric Allen--Cahn approximation of $T_{min}^n$ with Morse index $n+3$. 
\end{itemize}
\end{thm}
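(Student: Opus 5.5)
We argue by contradiction together with a compactness argument, following the corresponding low-energy classification in \cite{ChenGaspar25} for $S^3$ with Lemma \ref{minimal} in place of the two-dimensional rigidity.

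\textbf{Step 1: the limit interface.} Suppose the statement fails along a sequence $\e_j\downarrow 0$, with nonconstant solutions $u_j$ satisfying $E_{\e_j}(u_j)\le\omega_{n+3}(E_{\e_j})$. By \cite{GG,Dey} the energies are uniformly bounded, with $\limsup \frac{E_{\e_j}(u_j)}{2\sigma}\le \omega_{n+3}(S^{n+1})=\text{Area}(T_{\min}^n)$ by Theorem \ref{classification}. By the convergence theorem of \cite{HT,TW,Guaraco}, a subsequence has $V_{\e_j,u_j}\to V$, with $\frac1\sigma V$ stationary integral and of mass at most $2\text{Area}(T_{\min}^n)$.

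To apply Lemma \ref{minimal}, we need to halve the mass and establish $\partial[T]=0$. We would use the standard fact that the limit of $\{u_j\ge 0\}$ (or $\{u_j>0\}$) is a Caccioppoli set $\Omega$. Then $\frac1{2\sigma}V$ dominates $|\partial^*\Omega|$, and the mod $2$ chain of $\frac{1}{2\sigma}V$ is a cycle, either via a Hutchinson--Tonegawa parity argument or via the density estimate of Lemma \ref{density}. The latter forces multiplicity one once the mass is below $2\text{Area}(S^n)$, which is the intended use of Lemma \ref{density}.

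Nonconstancy plus the Frankel-type property \cite{hiesmayr2020rigidity} and the standard lower energy bound rule out $V=0$. Lemma \ref{minimal} then identifies $T=\frac1{2\sigma}V$ as a multiplicity-one equatorial sphere or a multiplicity-one $T_{\min}^n$. Moreover, the energy is strictly below $\omega_{n+3}$ in the first alternative, since $\text{Area}(S^n)<\text{Area}(T_{\min}^n)$.

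\textbf{Step 2: rigidity near the limit.} This is the main obstacle: passing from varifold convergence to exact identification of $u_j$ for small $\e$.

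\emph{Equatorial sphere.} We would invoke the multiplicity-one Allen--Cahn rigidity of \cite{CGGM} near nondegenerate-up-to-isometries totally geodesic spheres. Solutions converging with multiplicity one to an equator are ground states: radially symmetric, with index $1$, vanishing exactly on an equator, and with energy $\omega_1(E_\e)$. Since every width $\omega_1,\dots,\omega_{n+2}$ lies between $\omega_1(E_\e)$ and the energy of such a solution, these widths coincide. This gives (i).

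\emph{Clifford hypersurface $T_{\min}^n$.} The hypersurface is integrable, since all Jacobi fields come from ambient rotations, with nullity $(p+1)(n-p+1)=\dim \mathrm{SO}(n+2)/(\mathrm{SO}(p+1)\times\mathrm{SO}(n-p+1))$. We would use a Lyapunov--Schmidt / Caju--Gaspar \cite{CajuGaspar} uniqueness result for multiplicity-one Allen--Cahn approximations of nondegenerate-modulo-symmetry minimal hypersurfaces. This would show that $u_j$ is, up to a rotation, the symmetric solution constructed there. Its index equals the index $n+3$ of $T_{\min}^n$ by index/nullity convergence results in the multiplicity-one case, where kernel elements correspond to rotations.

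Finally, we must exclude $E_\e(u)=\omega_{n+3}(E_\e)$ in the sphere case and $E_\e(u)<\omega_{n+3}(E_\e)$ in the Clifford case. The first holds because ground states have energy converging to $2\sigma\text{Area}(S^n)<2\sigma\,\omega_{n+3}$. For the second, we would show $\omega_{n+3}(E_\e)$ is achieved by a critical point with energy at least $\omega_{n+2}(E_\e)$. If $\omega_{n+3}(E_\e)$ equalled the ground-state level, Lusternik--Schnirelmann would force a large critical set at that level; the equator family $\mathbb{RP}^{n+1}$ has cup-length $n+1$, which can absorb only $\omega_1,\ldots,\omega_{n+2}$. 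Hence the only critical points with energy in $(\omega_1(E_\e),\omega_{n+3}(E_\e)]$ are Clifford approximations, all with the same energy by symmetry. Their energy must therefore equal $\omega_{n+3}(E_\e)$, giving (ii).
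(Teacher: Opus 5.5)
Your proposal follows essentially the paper's route: compactness to a stationary integral limit, Lemma \ref{density} plus the parity argument to get a multiplicity-one mod $2$ cycle, and Lemma \ref{minimal} to identify it. It then uses index bounds plus rigidity to identify $u_j$ for small $\e$, and width convergence together with uniqueness up to isometry and sign to place the energies.

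There are two corrections and one simplification.
\begin{itemize}
\item \emph{Normalization.} $V_{\e,u}$ already contains the factor $\frac12$, so the convergence theorem says $\frac1\sigma V$ itself is integral with mass at most $\text{Area}(T_{\min}^n)<2\,\text{Area}(S^n)$. Lemmas \ref{density} and \ref{minimal} must therefore be applied to $\frac1\sigma V$, not to $\frac1{2\sigma}V$, which need not be integral.
\item \emph{Citation for uniqueness.} The uniqueness of Clifford approximations up to isometry and sign is taken in the paper from Theorem 4 of \cite{hiesmayr2020rigidity}, with the index fixed by \cite{gaspar2020second} and Theorem 5.11 of \cite{ChodoshMantoulidis20}. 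The paper uses \cite{CajuGaspar} only for existence.
\item \emph{Lusternik--Schnirelmann step.} This step is redundant: width convergence already gives $\omega_{n+3}(E_\e)>\omega_1(E_\e)$ for small $\e$.
\end{itemize}
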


\begin{proof}
Let $\e_j\downarrow0$ and let $u_j$ be nonconstant solutions with $E_{\e_j}(u_j)\leq\omega_{n+3}(E_{\e_j})$. Passing to a subsequence, the varifolds $\frac{1}{\sigma}V_{\e_j,u_j}$ converge to a stationary integral varifold $V$ with area in $[\text{Area}(S^n),\text{Area}(T_{min}^n)]$. Lemma \ref{density} and the area bound imply that $V$ has multiplicity one. Then the parity argument in the proof of Lemma $1$ of \cite{ChenGaspar23} shows that $V$ is cyclic mod $2$. Lemma \ref{minimal} then implies that $V$ is an equator or a Clifford hypersurface $T_{min}^n$. The index estimates of \cite{gaspar2020second} and Theorem $5.11$ of \cite{ChodoshMantoulidis20}, together with the rigidity results of \cite{CGGM} and \cite{hiesmayr2020rigidity}, imply that, for large $j$, $u_j$ is a ground state solution or an Allen--Cahn approximation of the Clifford hypersurface $T_{min}^n$, with Morse index $1$ or $n+3$, respectively.

By convergence of the widths, critical points realizing $\omega_{n+3}(E_\e)$ cannot be ground states for small $\e$. The uniqueness up to isometry and change of sign proved in Theorem $4$ of \cite{hiesmayr2020rigidity}, together with the description of the first $n+2$ widths in \cite{CGGM}, then gives the stated energy levels.

\end{proof}

The solutions of the first type are called \emph{ground state solutions}, as they have least energy among nonconstant solutions of \eqref{AC}. Ground states and Allen--Cahn approximations of Clifford hypersurfaces with equal factor dimensions can be constructed by direct minimization and reflection \cite{CGGM}. For unequal factor dimensions, we use the construction of \cite{CajuGaspar}; see also Appendix A of \cite{hiesmayr2020rigidity}.

\section{Proofs of the main theorems} \label{proof of theorems}

\subsection{Proof of Theorem \ref{Morse}}

The proof follows Sections $3$ and $4$ of \cite{ChenGaspar25}. We first record the higher-dimensional version of Proposition $1$ of \cite{ChenGaspar25}.

\begin{prop}\label{energy-mb}
For sufficiently small $\epsilon$, the energy functional $E_\e$ is a Morse-Bott functional on $\{E_\e\leq\omega_{n+3}(E_\e)\}$. Its critical set in this sublevel is the disjoint union of
\begin{enumerate}
    \item the constant functions $1$ and $-1$;

    \item the set $ A_\e = \{E_\e=\omega_1(E_\e)=\cdots=\omega_{n+2}(E_\e)\} \cap\crit E_\e$ of ground state solutions, which is a smoothly embedded $(n+1)$-sphere in $W^{1,2}(S^{n+1})$.

    \item the set $B_\e = \{E_\e=\omega_{n+3}(E_\e)\}\cap\crit E_\e$ of Allen--Cahn approximations of $T_{min}^n$, symmetric under $\mathrm{SO}(p+1)\times\mathrm{SO}(n-p+1)$ up to rotation,  which is smoothly embedded in $W^{1,2}(S^{n+1})$ and is diffeomorphic to
    \[\begin{cases}
    G_{p+1}(\R^{n+2}), & n \text{ even},\\
    G_{p+1}(\R^{n+2})\sqcup G_{p+1}(\R^{n+2}), & n \text{ odd}.
    \end{cases}\]
\end{enumerate}
\end{prop}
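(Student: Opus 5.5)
The plan follows Proposition $1$ of \cite{ChenGaspar25}, with Theorem \ref{lowenergy} supplying the list of critical points.

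\textbf{Critical set.} By Theorem \ref{lowenergy}, every nonconstant critical point in $\{E_\e\le\omega_{n+3}(E_\e)\}$ is one of two types. Either it is a ground state with energy $\omega_1(E_\e)$, or, up to an isometry, it is an $\mathrm{SO}(p+1)\times\mathrm{SO}(n-p+1)$-symmetric approximation of $T_{min}^n$ with energy $\omega_{n+3}(E_\e)$. Together with $\pm1$ this gives the three disjoint pieces; they are disjoint because their energies differ for small $\e$.

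\textbf{Parametrizing the pieces.} By Theorem $4$ of \cite{hiesmayr2020rigidity} and \cite{CGGM}, ground states are unique up to isometry and sign change. We fix one radially symmetric ground state $u_0$ whose nodal set is the equator orthogonal to $e\in S^{n+1}$. Since $-u_0$ equals $u_0$ composed with the reflection fixing that equator, the map $e\mapsto u_0\circ R_e$ identifies $A_\e$ with $S^{n+1}$. Here $R_e$ denotes an isometry sending the north pole to $e$. The map is well defined because $u_0$ is invariant under the stabilizer $\mathrm{O}(n+1)$.

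For $B_\e$, fix the symmetric approximation $v$ of $\mathcal{T}$ given by \cite{CajuGaspar}. The orbit of $v$ under $\mathrm{O}(n+2)$, together with the orbit of $-v$, is all of $B_\e$, again by Hiesmayr's uniqueness.

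\textbf{Stabilizer and topology of $B_\e$.} The stabilizer of $\mathcal T$ is $\mathrm{O}(p+1)\times\mathrm{O}(n-p+1)$, possibly extended by elements swapping the two factors when $p+1=n-p+1$. Swapping is impossible since $p=\lfloor n/2\rfloor$ gives $p\neq n-p$ unless $n$ is even.
\begin{itemize}
\item If $n=2p$, the factors have equal dimension. A swap maps $\{u<0\}$ to $\{u>0\}$, so it carries $v$ to $-v$. The orbit of $v$ then already contains $-v$, and $B_\e$ is a single copy of $\mathrm{O}(n+2)/(\mathrm{O}(p+1)\times\mathrm{O}(p+1))\cong G_{p+1}(\R^{n+2})$.
\item If $n=2p+1$, the factors have different dimensions, so no isometry exchanges the two complementary regions and $-v$ lies in a different orbit. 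This gives two copies of $G_{p+1}(\R^{n+2})$.
\end{itemize}
One still has to check that $\mathrm{O}(p+1)\times\mathrm{O}(n-p+1)$ fixes $v$ itself, and does not send it to $-v$. This holds since $v$ is $\mathrm{SO}\times\mathrm{SO}$-invariant and even under coordinate reflections in each factor, which holds for the constructions in \cite{CajuGaspar}. Smoothness of the embeddings follows because orbit maps of a smooth action of a compact Lie group on $W^{1,2}$ are smooth embeddings, and the action is smooth on the smooth function $v$.

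\textbf{Morse--Bott nondegeneracy (main obstacle).} We must show that $\ker D^2E_\e$ at each critical point equals the tangent space to the orbit. At $\pm1$ this is clear: the linearization is $\e\Delta-\e^{-1}W''(\pm1)$ with $W''(\pm1)>0$.
\begin{itemize}
\item For ground states, the tangent space of $A_\e$ has dimension $n+1$. We need nullity exactly $n+1$. This comes from \cite{CGGM}: ground states are nondegenerate modulo rotations, with kernel spanned by $\langle x,\cdot\rangle$-type Killing derivatives.
\item For $B_\e$, the orbit has dimension $\dim G_{p+1}(\R^{n+2})=(p+1)(n-p+1)$. This equals the nullity of $T_{min}^n$ from \cite{hsiang1971minimal}, and all Jacobi fields of $T_{min}^n$ come from rotations. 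We would use the upper semicontinuity of nullity, $\limsup_{\e\to0}\mathrm{nul}_\e(u_\e)\le\mathrm{nul}(T_{min}^n)$, from Theorem $5.11$ of \cite{ChodoshMantoulidis20} and \cite{gaspar2020second}. It is applicable here because $T_{min}^n$ is smooth, multiplicity one and two-sided. Combined with the lower bound given by the Killing fields, this forces equality.
\end{itemize}
The delicate point is ensuring that these semicontinuity results apply uniformly to all critical points in $B_\e$ for small $\e$. This we handle by a contradiction-and-compactness argument, using Theorem \ref{lowenergy} along a sequence $\e_j\to0$.
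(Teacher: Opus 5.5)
Your proposal is correct and follows essentially the same route as the paper. The critical set comes from Theorem \ref{lowenergy}, the pieces are orbits parametrized by the pole and by $(p+1)$-planes with the same even/odd sign distinction, and Morse--Bott follows from nullity equal to orbit dimension. The paper just cites Section 2.3 and Appendix A of Hiesmayr for that nullity count, whereas you derive it explicitly: the Chodosh--Mantoulidis index-plus-nullity upper bound and Gaspar's index lower bound give the upper bound, and the Killing fields give the lower bound.
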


The Morse--Bott property follows from the equality between the nullity and the dimension of the rotation orbit, as in Section $2.3$ and Appendix A of \cite{hiesmayr2020rigidity}.

The ground state orbit is parametrized by its pole and is therefore diffeomorphic to $S^{n+1}$. The rotation orbit of an Allen--Cahn approximation of the Clifford hypersurface $T_{min}^n$ is parametrized by the $(p+1)$-dimensional subspaces of $\R^{n+2}$ and is diffeomorphic to $G_{p+1}(\R^{n+2})$. When $n$ is even, a rotation interchanging the two sphere factors sends the solution to its negative, so they belong to the same orbit. When $n$ is odd, the two sphere factors have different dimensions, thus the two signs give distinct orbits. In either case, $B_\e/\{u\sim -u\}$ is connected, so the auxiliary even Morse functions can be chosen as in Section $3$ of \cite{ChenGaspar25}.

We retain $H=W^{1,2}(S^{n+1})$ as the variational space. The quadratic growth condition, boundedness of $W''$, and compactness of the embedding $H\hookrightarrow L^2$ give the $C^2$ regularity and Palais--Smale condition in every dimension. The Morse-theoretic argument only requires the $C^2$ Morse lemma and its homological consequences; see \cite{DucHungKhai07}.

For the parabolic argument, we retain $X=L^2(S^{n+1})$ and $X^\alpha=W^{2\alpha,2}(S^{n+1})$, with $3/4<\alpha<1$, as in Section $2.3$ of \cite{ChenGaspar25}. The inclusion $X^\alpha\hookrightarrow H$ holds in every dimension. Although the embedding into $L^\infty$ need not hold in higher dimensions, boundedness of $W''$ makes $W'$ globally Lipschitz on $X$. By Lemma $6$ of \cite{ChenGaspar25}, the coefficient of $\Delta u$ is $1+\delta\theta(u)$, where $\delta$ is the perturbation parameter and $\theta:H\to\R$ is bounded and Lipschitz, so this coefficient remains uniformly positive for sufficiently small $\delta$. The remaining perturbation terms are Lipschitz from $H$ to $X$, so the lower-order terms are locally Lipschitz from $X^\alpha$ to $X$. Since the perturbed functional differs from $\e^{-1}E_\e$ by a bounded term, its decrease gives a uniform $H$ bound along the flow for fixed $\e$. The lower-order terms are therefore bounded in $X$. The estimates in \cite{Amann88,Henry81} give a uniform $X^\beta$ bound for times bounded away from zero, where $\alpha<\beta<1$. The continuation criterion of \cite{Amann88}, recalled in Theorem $2$(iii) of \cite{ChenGaspar25}, then gives existence for all positive time.

With the auxiliary Morse functions chosen as in \cite{ChenGaspar25}, the solutions $\pm u_\e^{+\infty}$ and $\pm u_\e^{-\infty}$ have perturbed Morse indices $n+2$ and $n+3$, respectively. All critical points below the level of $\pm u_\e^{+\infty}$ have index at most $n+1$. Thus the homological argument of Section $4$ of \cite{ChenGaspar25} applies with unstable disks $D^{n+3}$, boundaries $S^{n+2}$, and $(n+3)$-sweepouts. We then let the perturbation tend to zero and identify the limits as in Section $3$ of \cite{ChenGaspar25}.

\subsection{Proof of Theorem \ref{flow}}

We follow the arguments in Sections $4$ and $5$ of \cite{ChenGaspar23}, replacing their low-energy classifications by Theorem \ref{lowenergy} and Lemma \ref{minimal}.

Let $G_1=\mathrm{SO}(p+1)$ and $G_2=\mathrm{SO}(n-p+1)$ act on the first $p+1$ and last $n-p+1$ coordinates of $S^{n+1} \subset \R^{n+2}$, respectively. By Theorem \ref{lowenergy}(ii), after a rotation we may assume that $u_\e^{-\infty}$ is $G_1\times G_2$ invariant and has Morse index $n+3$. It depends only on the lengths of the two coordinate vectors, so it is also invariant under coordinate reflections.

Let $\nu$ be a unit normal to $\mathcal T$. By equation $(2.1)$ and Theorem $2.4$ of \cite{perdomo2001low}, the constant and coordinate functions are Jacobi eigenfunctions with eigenvalues $-2n$ and $-n$, respectively. Thus $\nu,x_1\nu,\ldots,x_{n+2}\nu$ span an $(n+3)$-dimensional subspace on which the second variation of area is negative definite. Extend these normal vector fields by parallel transport along normal geodesics, with a cutoff depending only on distance to $\mathcal T$ and equal to $1$ near it. By the proof of Theorem $A$ of \cite{gaspar2020second}, using equation $(7)$, differentiating $u_\e^{-\infty}$ along these extended fields gives functions spanning an $(n+3)$-dimensional subspace on which the second variation of $E_\e$ at $u_\e^{-\infty}$ is negative definite for sufficiently small $\e$. This construction preserves the rotation and reflection actions on the original normal variations.

The $L^2$ projection of this subspace onto the unstable eigenspace is injective: a function in this subspace with zero projection has nonnegative second variation and must therefore be zero. Both spaces have dimension $n+3$, so the projection is an isomorphism. It preserves rotations and reflections because these isometries preserve the linearized operator and the $L^2$ inner product. The projected function arising from the constant normal variation is even in every coordinate, while the one arising from $x_i\nu$ is odd in the $i$-th coordinate and even in all others. Since these projected functions form a basis of the unstable eigenspace, each of these specified parities defines a one-dimensional subspace within it. The linearized operator preserves these subspaces, so the projected functions are eigenfunctions.

The positive first eigenfunction is fixed by these reflections, so it spans the subspace of functions even in every coordinate. We may therefore choose an $L^2$-orthonormal basis $\{\varphi_i\}_{i=1}^{n+3}$, with the first eigenfunction $\varphi_1>0$, so that $G_1$ acts on $\{\varphi_2,\ldots,\varphi_{p+2}\}$ by the same rotations as on the first $p+1$ coordinates and fixes the remaining eigenfunctions, while $G_2$ acts similarly on $\{\varphi_{p+3},\ldots,\varphi_{n+3}\}$. The proof of Lemma $6$ of \cite{ChenGaspar23} then gives invariance of $\mathscr S_\e(a)$ whenever its parameter is fixed by an isometry preserving $u_\e^{-\infty}$.

When $n=2p$, interchanging the sphere factors sends $u_\e^{-\infty}$ to its negative and preserves the linearized operator since $W$ is even. As in Section $5.1$ of \cite{ChenGaspar23}, choose $\varphi_2$ and $\varphi_{p+3}$ so that they are exchanged by this isometry. Take small equal and opposite coefficients in these directions, with all others zero. The resulting solution changes sign under the interchange, as in equation $(8)$ of \cite{ChenGaspar23}, and therefore cannot converge to $\pm1$. It is invariant under rotations and reflections fixing the two chosen directions, hence under $\mathrm{SO}(p)\times\mathrm{SO}(p)$.

When $n=2p+1$, use a small semicircle in the $(\varphi_1,\varphi_{p+3})$ coefficient plane joining $(r,0,\ldots,0)$ to $(-r,0,\ldots,0)$, as in the proof of Proposition $1$ of \cite{ChenGaspar23}. The same connectedness argument gives a solution which does not converge to $\pm1$. It is $G_1$ invariant and is also invariant under rotations and reflections in the second coordinate group fixing the chosen direction.

The proof of Proposition $2$ of \cite{ChenGaspar23}, using Theorem \ref{lowenergy}, gives ground state forward limits. Translate time so that $E_\e(u_\e(\cdot,0))=\sigma(\text{Area}(S^n)+\text{Area}(\mathcal{T}))$. The convergence and parity arguments in Section $4.2$ of \cite{ChenGaspar23}, using Lemmas \ref{density} and \ref{minimal}, then give an integral Brakke flow with Clifford backward limits and equatorial forward limits.

The varifolds inherit these symmetries, leaving finitely many compatible limits: for $n\geq3$, two least-area Clifford hypersurfaces, and two equators when $n$ is even or one when $n$ is odd. The case $n=2$ is treated in Sections $5.2$ and $5.3$ of \cite{ChenGaspar23}. After taking a subsequence with a fixed forward equator, Propositions $3$ and $4$ of \cite{ChenGaspar23} identify the backward limit as $\mathcal{T}$ and the forward limit as this equator. Brakke's Local Regularity Theorem \cite{ilmanen1994elliptic} gives smoothness for large $|t|$.

Finally, rotating the two chosen directions within their coordinate groups gives an $S^p\times S^p$ family when $n=2p$, while rotating the chosen direction in the second coordinate group gives a $G_1$ invariant $S^{p+1}$ family when $n=2p+1$. The same rotation argument as in Section $5.3$ of \cite{ChenGaspar23} gives continuous and equivariant dependence on these parameters.

\bibliographystyle{amsalpha}
\bibliography{main}

\end{document}